\documentclass{amsart}
\usepackage{dsfont}
\usepackage[utf8]{inputenc}
\usepackage[T1]{fontenc}
\usepackage{ tipa }
\usepackage{helvet}
\usepackage{color,comment}
\usepackage{mathrsfs}  
\usepackage{stmaryrd}  
\usepackage{amsmath,amsxtra,amsthm,amssymb,xr,enumerate,fullpage}
\usepackage[all]{xy}
\usepackage[bbgreekl]{mathbbol}
\usepackage{bbm}
\usepackage[english,french]{babel}
\DeclareMathSymbol{\invques}{\mathord}{operators}{`>}
\DeclareUnicodeCharacter{00BF}{\tmquestiondown}
\DeclareRobustCommand{\tmquestiondown}{%
  \ifmmode\invques\else\textquestiondown\fi
}

\newtheorem{theorem}{Theorem}[section]
\newtheorem{lemma}[theorem]{Lemma}

\newtheorem{corollary}[theorem]{Corollary}
\newtheorem{defn}[theorem]{Definition}

\newtheorem{remark}[theorem]{Remark}

\newtheorem{convention}[theorem]{Convention}

\setlength{\parskip}{.5\baselineskip}

\newcommand{\Gal}{\operatorname{Gal}}

\newcommand{\BB}{\mathbb{B}}
\newcommand{\NN}{\mathbb{N}}
\newcommand{\QQ}{\mathbb{Q}}
\newcommand{\Qp}{\mathbb{Q}_p}
\newcommand{\Zp}{\mathbb{Z}_p}
\newcommand{\ZZ}{\mathbb{Z}}

\newcommand{\g}{\mathbf{g}}

\newcommand{\fp}{\mathfrak{p}}

\newcommand{\vp}{\varphi}

\newcommand{\cH}{\mathcal{H}}
\newcommand{\cO}{\mathcal{O}}

\newcommand{\Brig}{\BB_{{\rm rig},\Qp}^+}

\newcommand{\cyc}{\textup{cyc}}

\newcommand{\ff}{\mathfrak{f}}
\newcommand{\fL}{\mathfrak{L}}
\newcommand{\fm}{\mathfrak{M}}

\newcommand{\ac}{\textup{ac}}
\newcommand{\LL}{\Lambda}

\newcommand{\f}{\textup{\bf f}}

\newcommand{\lra}{\longrightarrow}
\newcommand{\ra}{\rightarrow}

\newcommand{\Bf}{\mathbf{f}}

\newcommand{\Tw}{\mathrm{Tw}}


\newcommand{\p}{\mathfrak{p}}
\newcommand{\q}{\mathfrak{q}}

\newcommand{\tlog}{\widetilde{\log}}

\usepackage{hyperref}

\begin{document}

\title{Functional Equation for $p$-adic Rankin-Selberg $L$-functions}

\author{K\^az\i m B\"uy\"ukboduk}
\address{K\^az\i m B\"uy\"ukboduk\newline UCD School of Mathematics and Statistics\\ University College Dublin\\ Ireland}
\email{kazim.buyukboduk@ucd.ie}

\author{Antonio Lei}
\address{Antonio Lei\newline
D\'epartement de Math\'ematiques et de Statistique\\
Universit\'e Laval\\
1045 Avenue de la M\'edecine\\
Qu\'ebec, QC\\
Canada G1V 0A6}
\email{antonio.lei@mat.ulaval.ca}

\thanks{The first author is partially supported by the European Commission Global Fellowship CriticalGZ. The second author is supported by the NSERC Discovery Grants Program 05710.}
\subjclass[2010]{11R23 (primary); 11S40, 11R20, 11F11  (secondary) }
\keywords{Functional equations, $p$-adic Rankin-Selberg $L$-functions, anticyclotomic $p$-adic $L$-functions, supersingular primes}
\selectlanguage{english}
\begin{abstract} 
   We prove a functional equation for the three-variable $p$-adic $L$-function attached to the Rankin-Selberg convolution of a Coleman family and a CM Hida family, which was studied by Loeffler and Zerbes. Consequently, we deduce that an anticyclotomic $p$-adic $L$-function attached to a $p$-non-ordinary modular form vanishes identically in the indefinite setting. This is a crucial step towards the Iwasawa main conjecture for non-ordinary modular forms over the anticyclotomic $\Zp$-extension of an imaginary quadratic field in the indefinite setting.
\end{abstract}

\maketitle

\selectlanguage{english}
\section{Introduction}
In \cite{LZ1}, Loeffler and Zerbes constructed a three-variable $p$-adic Rankin-Selberg $L$-function attached to two families of modular forms, which is characterized by its interpolating property at the crystalline points of a three-parameter family (afforded by a pair of Coleman families and the cyclotomic variation). In the special case where one of the two families is ordinary, Loeffler \cite{loefflerERL} extended this work to prove an interpolation formula also at non-crystalline points.

The goal of this article is to prove a functional equation for the Loeffler-Zerbes three-variable $p$-adic Rankin-Selberg $L$-functions associated to the Rankin-Selberg product of a Coleman family and a CM Hida family. See Theorem~\ref{thm:mainfunctionaleqninthreevariables} for the precise formulation of this functional equation. The main ingredients of our proof are Loeffler's interpolation formula in \cite{loefflerERL}, the functional equation for the complex Rankin-Selberg $L$-function of Li \cite{WinnieLi79} and  an analysis of root numbers, which allow us to interpolate various ``fudge factors'' arising from the complex functional equation in $p$-adic families (see Theorem~\ref{thm:mainfunctionaleqninthreevariables} below).

 The $p$-adic $L$-function we study here plays an important role in \cite{BFsuper}, where we study the Iwasawa Theory of the Rankin-Selberg convolutions $f\otimes\chi$ (of the base change of a $p$-non-ordinary modular form $f$ to an imaginary quadratic field $K$ where $p$ splits, with a ray class character $\chi$). For example,  the Loeffler-Zerbes $p$-adic Rankin-Selberg $L$-function gives rise to a two-variable $p$-adic $L$-function for $f\otimes \chi$ over the $\Zp^2$-extension of $K$, which in turn allows one to formulate an Iwasawa main conjecture. Furthermore, on specializing to the anticyclotomic $\Zp$-extension of $K$, we may study the behaviour of the arithmetic invariants associated to the Rankin-Selberg convolution $f\otimes \chi$ (where $\chi$ is now taken as a ring class character) along this tower.  When the root number $\epsilon(f/K)$ is $-1$, we show  in Corollary~\ref{cor:vanishingoftthefullanticyclotomicpadicLfunction} that our functional equation implies that the anticyclotomic specialization of the  said $p$-adic $L$-function is identically zero (as a matter of fact, unless $k=2$, deforming along Coleman family seems necessary to achieve all this; see Remark~\ref{rem:noeasywayout} below). This is one of the main ingredients in the portion of our work~\cite{BFsuper} where we prove results towards indefinite $p$-non-ordinary anticyclotomic main conjectures for $f\otimes\chi$.

One of the main results in \cite{BFsuper} is that the 2-variable $p$-adic function of Loeffler-Zerbes can be decomposed into integral signed $p$-adic $L$-functions using certain logarithmic matrix coming from the theory of Wach modules. Since these $p$-adic $L$-functions have bounded denominators, they are more suitable for the study of Iwasawa of $f\otimes\chi$ over the $\Zp^2$-extension of $K$ than their unbounded counterparts. There is a reformulation of Iwasawa Main Conjectures in this set up involving these $p$-adic $L$-functions (c.f., \cite[Conjecture 4.15]{BFsuper}), which one may attack using the integral (signed) Beilinson-Flach Euler systems. In the special case where $a_p(f)=0$, we show in  Theorem~\ref{thm:L5} that the anticyclotomic specialization of some of these signed $p$-adic $L$-functions also vanish identically. In this portion, we follow the line of argument Castella and Wan given in \cite{castellawan1607}, where they have also proved a similar result for the signed $p$-adic $L$-functions associated to an elliptic curve (granted the functional equation for analytic $p$-adic $L$-functions).

\section{Set up}
Fix forever a prime $p\geq 5$ and an imaginary quadratic field $K$ where $(p)=\fp\fp^c$ splits. The superscript $c$ will always stand for the action of a fixed complex conjugation. We fix a modulus $\mathfrak{f}$ coprime to $p$ with the property that the ray class number of $K$ modulo $\mathfrak{f}$ is not divisible by $p$. We also fix once and for all  embeddings $\iota_\infty:\overline{\QQ}\hookrightarrow \mathbb{C}$ and $\iota_p: \overline{\QQ}\hookrightarrow \mathbb{C}_p$ as well as an isomorphism $j: \mathbb{C}\stackrel{\sim}{\lra}\mathbb{C}_p$ such the diagram
$$\xymatrix @C=0.3cm@R=0.2cm{&\mathbb{C}\ar[dd]^j\\ \overline{\QQ}\ar[ur]^{\iota_\infty}\ar[dr]_{\iota_p}&\\
&\mathbb{C}_p\\}$$
commutes. and suppose that the prime $\fp$ of $K$ lands inside the maximal ideal of $\cO_{\mathbb{C}_p}$.  Throughout the article, $\chi$ denotes    a fixed ray class character  modulo $\ff $ with $\chi(\fp)\neq \chi(\fp^c)$.

Let $K_\infty$ denote the  $\ZZ_p^2$-extension of $K$ with $\Gamma:=\Gal(K_\infty/K)\cong \ZZ_p^2$. We let $K_\cyc/K$ and $K_\ac/K$ denote the cyclotomic and the anticyclotomic $\Zp$-extensions of $K$  contained in $K_\infty$ respectively. We write $\Gamma_{\ac}:=\Gal(K_\ac/K)$ and $\Gamma_\cyc:=\Gal(K_\cyc/K)$.  We also set $\Gamma_\cyc^\circ:=\Gal(K(\mu_{p^\infty})/K)\cong \ZZ_p^\times$. Let $\Delta:=\Gal(K(\mu_p)/K)$ so that $\Gamma_\cyc^\circ=\Gamma_\cyc\times\Delta$. For $\q=\p,\p^c$, we let  $\Gamma_\q$ denote the Galois group of the maximal pro-$p$ extension of $K$ unramified outside $\q$. In particular, we have the decompositions
\[
\Gamma\cong \Gamma_\cyc\times \Gamma_\ac\cong \Gamma_\p\times\Gamma_{\p^c}.
\]
Note that $\Gamma_\cyc$ and $\Gamma_\ac$ are the eigenspaces of $\Gamma$ under  the complex conjugation, whereas $\Gamma_\p$ and $\Gamma_{\p^c}$ are interchanged by the complex conjugation.

For a finite flat extension $\cO$ of $\ZZ_p$ and for $\Gamma_?\in\{\Gamma,\Gamma_\ac,\Gamma_\cyc,\Gamma_\p,\Gamma_{\p^c}, \Gamma_\cyc^\circ\}$, we define the Iwasawa algebra  
$\LL_{\cO}(\Gamma_?):=\cO[[\Gamma_?]]$ with coefficients in $\cO$. If $L$ is the field of fractions of $\cO$, we write $\LL_L(\Gamma_?)$ for $\LL_{\cO}(\Gamma_?)\otimes L$.  
For $?\in\{\ac, \cyc,\p,\p^c\}$, we set
$$ \Lambda_L^{\dagger}(\Gamma_?):=\left\{\sum c_n(\gamma_?-1)^n\in L[[\gamma_?-1]]\,:\ \displaystyle{\lim_{n\ra \infty}}\,\, |c_n|r^n=0\,\, \forall r\in [0,1)\right\}$$
(where $\gamma_?\in \Gamma_?$ is an arbitrary topological generator) and 
$$ \Lambda_L^{\dagger}(\Gamma_\cyc^\circ):= \Lambda_L^{\dagger}(\Gamma_\cyc)\otimes_L L[\Delta].$$
For $r \in \mathbb{R}_{\geq 0}$ and we shall denote the set of Amice transforms of the $L$-valued $r$-tempered distributions on $\Gamma_?$ by $\cH_r(\Gamma_?)$; in particular, $\cH_0(\Gamma_?)=\LL_L(\Gamma_?)$.  For non-negative real numbers $u,v\in \mathbb{R}$, we define the ring $\cH_{u,v}(\Gamma):=\cH_u(\Gamma_\p)\widehat{\otimes} \cH_v(\Gamma_{\p^c})$, which can be identified with a subring of $\Lambda_L^\dagger(\Gamma)$.  Throughout, we assume that $\gamma_\cyc=\gamma_\p\gamma_{\p^c}$ and $\gamma_\ac=\gamma_\p\gamma_{\p^c}^{-1}$.

Let $f \in S_k(\Gamma_0(N_f))$ be a normalized cuspidal eigen-newform of level $N_f$, even weight $k\ge2$ and trivial nebentypus. Throughout, we assume that $p\nmid N_f$. Let $\alpha$ and $\beta$ be the two roots of the Hecke polynomial $X^2-a_p(f)X+p^{k-1}$. We assume that $\alpha\neq \beta$ and let $f^\alpha$ and $f^\beta$ denote the two $p$-stabilizations of $f$. 

From now on, we fix $L$ to be a finite extension of $\Qp$ inside $\mathbb{C}_p$ that contains the images of all Fourier coefficients of $f$ as well as $\alpha$ and $\beta$ under $\iota_p$. Furthermore, we assume that  $L$ contains the values of our fixed ray class character $\chi$. Let $v_p$ be the $p$-adic valuation on $L$ that is normalized by $v_p(p)=1$. We  set $s_\alpha=v_p(\alpha)$ and $s_\beta=v_p(\beta)$. When $a_p(f)=0$,  we have $s_\alpha=s_{\beta}=\frac{k-1}{2}$. For a   ring class character $\eta$ of $K$ whose conductor is prime to $N_f$, we write $\epsilon(f/K) =\pm1$ for the global root number for the Rankin-Selberg $L$-series $L(f/K\otimes\eta,s)$. As the notation suggests, this quantity is  independent of the choice of $\eta$.

\subsection{Algebraic Hecke characters of $K$}
For $\q=\fp,\fp^c$, we let $\Omega_\q$ denote the maximal pro-$p$ quotient of the ray class group modulo $\q^\infty$. The geometrically normalized Artin map $\frak{A}$ induces  identifications 
$$\frak{A}:\,\Omega_\q\stackrel{\sim}{\lra}{\Gamma_\q}\,,\,\Omega:=\Omega_\p\times\Omega_{\p^c}\stackrel{\sim}{\lra} \Gamma\,.$$  
We let $\Omega_\cyc,\Omega_\cyc^\circ$ and $\Omega_\ac$  denote the subgroups of $\Omega$ corresponding to $\Gamma_\cyc,\Gamma_\cyc^\circ$ and $\Gamma_\ac$  under $\frak{A}$.

The id\`elic description of $\Omega_\q$ gives rise to a surjective map $\cO_\q^\times \twoheadrightarrow \Omega_\q$. Since we have assumed that $p$ does not divide the class number of $K$, this map allows us to identify the $1$-units $U_\q\subset \cO_\q^\times$ with $\Omega_\q$. Given an element $y \in \cO_\q^\times$, we write $y=\langle y \rangle [y]$ where $\langle y \rangle$ is a $1$-unit and $[y] \in \cO_\q^\times$ is a root of unity congruent to $y$ modulo $\q$.
 
\begin{defn}
\item[i)]  The \emph{$p$-adic avatar} $\widehat{\Xi}$ of an algebraic Hecke character $\Xi$ is defined by setting
$$\widehat{\Xi}(x):=x_\p^a x_{\p^c}^b\,j(\Xi(x_{\rm fin}))=x_\p^a x_{\p^c}^b\,j(\Xi(x))j(\Xi(x_\infty))^{-1}\,.$$
\item[ii)] The $($$p$-adic$)$ Galois character of an algebraic Hecke character $\Xi$ is given as the composite map
$$G_K^{\rm ab}\stackrel{\frak{A}^{-1}}{\lra}\mathbb{A}_K^\times/K^\times \stackrel{\widehat{\Xi}}{\lra}\mathbb{C}_p^\times\,.$$
\item[iii)] Let $\Sigma$ denote the set of algebraic Hecke characters $\Xi$ whose associated Galois characters factor through $\Gamma$.
For each positive integer $\kappa$, we let $\Sigma_{+}(\kappa)\subset\Sigma$ be the subset of characters  of $\infty$-type $(a,b)$ with $1-k/2\leq a\le b\leq \kappa-k/2-1$ and set ${\Sigma_{+}=\cup_{\kappa\in \ZZ^+}\,\Sigma_{+}(\kappa)}$.
\item[iv)] Let $\widetilde{\Sigma}\supset {\Sigma}$ denote the set of algebraic Hecke characters $\Xi$ of conductor dividing $p^\infty$ and whose $p$-adic avatars factor through $\Omega_\p\times\Omega_\cyc^\circ$. We similarly define the subsets $\widetilde{\Sigma}_{+}(\kappa)\subset\widetilde{\Sigma}_{+}\subset \widetilde{\Sigma}$.
\end{defn}
\begin{convention}
If there is no fear of confusion, we shall denote both the $p$-adic avatar and the $p$-adic Galois character attached to a Hecke character $\Xi$ also by $\Xi$.
\end{convention}
\begin{remark}\label{remark_padic_Hecke_explicit}
Suppose $\Xi \in \widetilde\Sigma$. Using the fact that the class number of $K$ is prime to $p$, one may describe its $p$-adic avatar $\widehat{\Xi}$ explicitly as follows: The $p$-adic Hecke character $\widehat{\Xi}$ factors through a quotient $\mathbb{A}_{K}^\times/Y$ $($where $Y\supset K^\times\prod_{v\nmid p}\cO_v^\times$, with the usual convention for $\cO_v^\times$ at archimedean places$)$ such that the natural map 
$$j_p: \cO_{{\p}}^\times\times\cO_{{\p^c}}^\times\lra \mathbb{A}_{K}^\times/Y$$ 
is surjective. Suppose $x \mod Y$ is the image of $(u_\p,u_{\p^c})$ under $j_p$. Let us write $\Xi_{\p}$ and $\Xi_{\p^c}$ for the restriction of the local characters at $\p$ and $\p^c$ to $\cO_{{\p}}^\times$ and $\cO_{{\p^c}}^\times$, respectively. Then 
$$\widehat{\Xi}(x)=u_{\p}^a u_{\p^c}^b j(\Xi_{\p}(u_{\p}))j(\Xi_{\p^c}(u_{\p^c})).$$ 
\end{remark}

\begin{lemma}
\label{lem:rigidityofheckechars}
Suppose $\Xi \in \widetilde{\Sigma}$ is an algebraic Hecke character of $\infty$-type $(a,b)$. Then $\Xi$ admits a factorization 
$$\Xi=\rho(\Xi)\,|\cdot|^b\left(\mu_{\Xi}\circ \NN_{K/\QQ}\right)$$ 
which is uniquely determined by the requirement that $\rho(\Xi)$ be unramified at $\fp^c$ and $\mu_\Xi$ be of finite order. Moreover, the $p$-adic avatar of $\rho(\Xi)$ necessarily factors through $\Omega_\p$.
\end{lemma}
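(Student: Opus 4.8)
The plan is to pass to $p$-adic avatars and exploit the internal direct-product structure of $\Omega$. Since $\Omega=\Omega_\fp\times\Omega_{\fp^c}$, while the cyclotomic line $\Omega_\cyc$ (generated by $\mathfrak{y}_\fp\mathfrak{y}_{\fp^c}$) meets the $\fp$-axis $\Omega_\fp$ trivially and the two together topologically generate $\Omega$, one has $\Omega=\Omega_\fp\times\Omega_\cyc$; enlarging the cyclotomic factor by the tame part $\Delta$ replaces $\Omega_\cyc$ by $\Omega_\cyc^\circ$. By the defining property of $\widetilde{\Sigma}^{(1)}$ the avatar $\widehat{\Xi}$ factors through $\Omega_\fp\times\Omega_\cyc^\circ$, so I would first write $\widehat\Xi=\widehat\rho\cdot\widehat\nu$ with $\widehat\rho$ a character of $\Omega_\fp$ and $\widehat\nu$ a character of $\Omega_\cyc^\circ\cong\Gamma_0$.

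For existence, the key observation is that $\widehat\nu$ is $c$-invariant: complex conjugation swaps $\Omega_\fp$ and $\Omega_{\fp^c}$, hence fixes the diagonal $\Omega_\cyc^\circ$ pointwise and inverts the anti-diagonal $\Omega_\ac$, so any character factoring through $\Omega_\cyc^\circ$ satisfies $\widehat\nu^{\,c}=\widehat\nu$. Applying Hilbert 90 for the quadratic extension $K/\QQ$ (whose kernel of $\NN_{K/\QQ}$ on idele classes is generated by elements $x^{1-c}$), a $c$-invariant character is trivial on this kernel and therefore descends through the norm, giving $\widehat\nu=\widehat{\mu'}\circ\NN_{K/\QQ}$ for a Hecke character $\mu'$ of $\QQ$ of $p$-power conductor. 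Using $|\cdot|_K=|\cdot|_\QQ\circ\NN_{K/\QQ}$ I would split $\mu'=|\cdot|_\QQ^{\,b}\cdot\mu_\Xi$ with $\mu_\Xi$ finite order, so that $\widehat\nu$ accounts for both $|\cdot|^b$ and $\mu_\Xi\circ\NN_{K/\QQ}$. The exponent is pinned to $b$ by an $\infty$-type comparison: $|\cdot|^b$ has $\infty$-type $(b,b)$ and $\mu_\Xi\circ\NN_{K/\QQ}$ has $\infty$-type $(0,0)$, so $\rho(\Xi):=\Xi\cdot|\cdot|^{-b}\cdot(\mu_\Xi\circ\NN_{K/\QQ})^{-1}$ has $\infty$-type $(a-b,0)$; vanishing of the $\fp^c$-slot together with triviality of $\widehat\rho$ on $\Omega_{\fp^c}$ is precisely the assertion that $\rho(\Xi)$ is unramified at $\fp^c$ and that its avatar factors through $\Omega_\fp$, yielding the ``moreover'' clause.

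For uniqueness, suppose $\Xi=\rho_1\cdot|\cdot|^b\cdot(\mu_1\circ\NN_{K/\QQ})=\rho_2\cdot|\cdot|^b\cdot(\mu_2\circ\NN_{K/\QQ})$ with each $\rho_i$ unramified at $\fp^c$. Then $\rho_1\rho_2^{-1}=(\mu_2\mu_1^{-1})\circ\NN_{K/\QQ}$. The left-hand side has avatar factoring through $\Omega_\fp$, hence trivial on $\Omega_{\fp^c}$, while the right-hand side is $c$-invariant (as $\NN_{K/\QQ}(x^c)=\NN_{K/\QQ}(x)$) and therefore, since $c$ inverts $\Omega_\ac$ and $p$ is odd, trivial on $\Omega_\ac$. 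Because $\mathfrak{y}_{\fp^c}$ and $\mathfrak{y}_\fp\mathfrak{y}_{\fp^c}^{-1}$ together topologically generate $\Omega_\fp\times\Omega_{\fp^c}$, the subgroups $\Omega_{\fp^c}$ and $\Omega_\ac$ generate $\Omega$, so the common character is trivial on all of $\Omega$; the finite tame part is handled the same way using $p\nmid h_K$. Thus $\rho_1=\rho_2$ and $\mu_1=\mu_2$.

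The hard part will be the existence step, and specifically making the purely group-theoretic splitting of $\widehat\Xi$ compatible with the arithmetic: verifying that the $c$-invariant cyclotomic factor genuinely descends to a Hecke character of $\QQ$ along $\NN_{K/\QQ}$ (the Hilbert 90 input), and carrying out the $\infty$-type bookkeeping that forces the exponent of $|\cdot|$ to equal $b$ and $\mu_\Xi$ to be of finite order. By contrast, uniqueness is a formal consequence of the complementarity of the $\fp^c$-axis and the anticyclotomic line inside $\Omega$.
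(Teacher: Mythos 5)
Your proposal takes a genuinely different route from the paper, and the uniqueness half is essentially fine; but the existence half, which you yourself flag as the hard part, contains two real gaps. The paper's proof is a one-line explicit construction: it sets $\mu:=\Xi_{\fp^c}$, the local component of $\Xi$ at $\fp^c$ restricted to $\cO_{\fp^c}^\times$, viewed as a Dirichlet character of $p$-power conductor via the canonical identification $\cO_{\fp^c}^\times\cong\ZZ_p^\times$, and then writes down $\rho(\Xi)(x)=\Xi_\fp\Xi_{\fp^c}^{-1}(x_\fp)x_{\tau(\fp)}^{a-b}$ directly, the ``moreover'' clause following by inspecting the resulting expression for the avatar. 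You instead decompose the avatar along $\Omega_\fp\times\Omega_\cyc^\circ$ and try to re-algebraize the cyclotomic factor abstractly; this is where the trouble lies.

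First, the Hilbert~90 descent is both overkill and incomplete as stated: triviality on the kernel of $\NN_{K/\QQ}$ only produces $\widehat{\mu'}$ on the index-two subgroup $\NN_{K/\QQ}(C_K)$ of the idele classes of $\QQ$; extending it to a Hecke character of $\QQ$ involves a choice (the two extensions differ by $\epsilon_K$), and the assertion that some extension has $p$-power conductor does not follow from $c$-invariance alone --- it uses the stronger fact, available to you by construction but never invoked, that $\widehat\nu$ factors through $\Omega_\cyc^\circ=\Gal(K(\mu_{p^\infty})/K)$, which restriction identifies canonically with $\Gal(\QQ(\mu_{p^\infty})/\QQ)$ (as $K\cap\QQ(\mu_{p^\infty})=\QQ$); using that isomorphism gives the descent with no ambiguity and no Hilbert~90. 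Second, and more seriously, the splitting $\mu'=|\cdot|_\QQ^b\cdot\mu_\Xi$ with $\mu_\Xi$ of finite order is precisely the content of the lemma, and your justification for it is circular: the ``$\infty$-type comparison'' computes the type of $\rho(\Xi)$ \emph{assuming} the splitting; it does not prove that $\widehat{\mu'}$ is locally algebraic of weight exactly $b$, i.e.\ that $\widehat\nu$ times the inverse $b$-th power of the cyclotomic character has finite order. To prove that, one must evaluate $\widehat\nu$ somewhere, and the natural place is the image of $\cO_{\fp^c}^\times$, which fills up $\Omega_\cyc^\circ$ because $\cO_{\fp^c}^\times$ maps trivially to $\Omega_\fp$: there the avatar of $\Xi$ is $x\mapsto\Xi_{\fp^c}(x)x^{b}$, with $\Xi_{\fp^c}$ of finite order, so $\mu_\Xi=\Xi_{\fp^c}$ under $\cO_{\fp^c}^\times\cong\ZZ_p^\times$. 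But at that point you have rediscovered the paper's definition of $\mu$, and the abstract detour has bought nothing. In short: the skeleton is salvageable, but the two steps you defer are the whole lemma, and filling them honestly collapses your argument onto the paper's computation.
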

\begin{proof}
Recall from Remark~\ref{remark_padic_Hecke_explicit} the characters $\Xi_\p$ and $\Xi_{\p^c}$, which we think of as Dirichlet characters of $p$-power conductor via the canonical isomorphisms
\begin{equation}
\label{eqn:identifyinglocalfactorsarpandpc}
\cO_{\p^c}^\times \stackrel{\sim}{\longleftarrow} \ZZ_p^\times\stackrel{\sim}{\lra} \cO_\p^\times\,.
\end{equation}
We take $\mu_\Xi:=\Xi_{\p^c}$ and $\rho(\Xi):=\Xi|\cdot|^{-b}\left(\mu_{\Xi}\circ \NN_{K/\QQ}\right)^{-1}$. Note that  $\rho(\Xi)$ is unramified at $\p^c$ and its $p$-adic avatar is of  the form
\begin{equation}\label{eqn_rho_exp_padic_avatar}
\widehat{\rho(\Xi)}(x)=u_{\fp}^{a-b} j(\Xi_{\p}\Xi_{\p^c}^{-1}(u_{\p}))
\end{equation}
under the notation of Remark~\ref{remark_padic_Hecke_explicit}. Since $\widehat{\Xi}$ and $\widehat{\mu_\Xi}\circ \NN_{K/\QQ}$ factor through $\Omega_\p\otimes\Omega_\cyc^\circ$, it follows that $\widehat{\rho(\Xi)}$ factors through  $\Omega_\p\otimes\Omega_\cyc^\circ$ as well. The explicit description of the $p$-adic avatar $\widehat{\rho(\Xi)}$ in \eqref{eqn_rho_exp_padic_avatar} shows in turn that $\widehat{\rho(\Xi)}$ factors through $\Omega_\p$, as required.
\end{proof}

\begin{remark}\label{rem_rhoXiforgeneralXi}
Suppose $\Xi$ is an arbitrary algebraic Hecke character of $\infty$-type $(a,b)$. The argument in the proof of Lemma~\ref{lem:rigidityofheckechars} in fact shows that $\Xi$ admits a factorization
$$\Xi=\rho(\Xi)\,|\cdot|^b\left(\mu_{\Xi}\circ \NN_{K/\QQ}\right)$$ 
which is uniquely determined by the requirement that $\rho(\Xi)$ be unramified at $\fp^c$ and $\mu_\Xi$ be of finite order. In the particular case when $\Xi=\chi\Psi$ where $\chi$ is as in the introduction and $\Psi \in \widetilde{\Sigma}$, one has  $\mu_{\Xi}=\mu_{\Psi}$ and $\rho(\Xi)=\chi\rho(\Psi)$.
\end{remark}
 
\subsection{$p$-adic Hecke characters of $K$} 

\begin{defn}
Let $\widetilde{\frak{Z}}$ denote the collection  of characters $\Xi$ on $\Omega_\p\times \Omega_\cyc^\circ$ which are of the form
$$\Xi(x)=\eta(x_\p)\langle x_\p \rangle^{a}\langle x_{\p^c}\rangle^{b} \cdot\nu\circ\NN_{K/\QQ}(x_{\rm fin}),$$ 
where  $\eta$ is a character of finite $p$-power order and $\p$-power conductor, $\nu$ is a Dirichlet character of $p$-power conductor, $a,b\in \Zp$ and $x_{\rm fin}\in \prod_{v\nmid \infty}(K\otimes \QQ_v)^\times$ is the non-archimedean component of the id\`ele $x$. The elements of $\widetilde{\frak{Z}}$ are called \emph{$p$-adic Hecke characters on $\Omega_\p\times \Omega_\cyc^\circ$}.  The pair $(a,b)$ is called the \emph{$p$-adic type} of $\Xi$.

Likewise, we let $\frak{Z}\subset \widetilde{\frak{Z}}$ denote the collection of $p$-adic Hecke characters factoring through $\Omega$.  Its elements have the form $\Xi(x)=\Xi_{\rm fin}(x_{\rm fin})\langle x_\p \rangle^{a}\langle x_{\p^c}\rangle^{b} $, where $\Xi_{\rm fin}$ is character of $p$-power order and of conductor dividing $p^\infty$. 
\end{defn}

Notice that if $\Xi \in \widetilde{\Sigma}$ (resp., $\Xi \in \Sigma$), then its $p$-adic avatar belongs to $\widetilde{\frak{Z}}$ (resp., to ${\frak{Z}}$). If the $\infty$-type of $\Xi$ is $(a,b)$, then the $p$-adic type of its $p$-adic avatar  also equals $(a,b)$. Furthermore, the $p$-adic avatars of Hecke characters $\Psi\in \Sigma_+$ considered as characters of $\Omega$ form a dense subset of the rigid analytic space of continuous $p$-adic characters of $\Omega$. We have the following $p$-adic analogue of Lemma~\ref{lem:rigidityofheckechars}.

\begin{lemma}
\label{lemma:factorpadiccharacters}
Let $\Xi \in \widetilde{\frak{Z}}$ be a $p$-adic Hecke character of $p$-adic type $(a,b)$. Then, there exists a uniquely determined factorization 
$$\Xi(x)=\rho(\Xi)(x)\cdot\NN_{K/\QQ}^b(x_{\rm fin})\cdot\left(\mu_\Xi\circ \NN_{K/\QQ}(x)\right),$$
where $\mu_\Xi$ is a finite Hecke character of $\QQ$, the $p$-adic Hecke character $\rho(\Xi)$ is unramified at $\p^c$ with $p$-adic type $(a-b,0)$. Moreover, the $p$-adic character $\rho(\Xi)$ necessarily factors through $\Omega_\fp$.
\end{lemma}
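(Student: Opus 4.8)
The plan is to follow the argument establishing Lemma~\ref{lem:rigidityofheckechars} almost verbatim, but now carried out intrinsically for $p$-adic Hecke characters rather than by passing to $p$-adic avatars of algebraic ones. I would begin from the idèlic expression
$$\Xi(x)=\eta(x_\p)\langle x_\p \rangle^{a}\langle x_{\p^c}\rangle^{b} \cdot\nu\circ\NN_{K/\QQ}(x_{\rm fin})$$
afforded by the definition of $\widetilde{\frak{Z}}$, and split off the central weight-$b$ twist by recognizing $\langle x_\p\rangle^b\langle x_{\p^c}\rangle^b$ as the unit-part of $x_\p^b x_{\p^c}^b$. The character $\Xi'(x):=\Xi(x)\,x_\p^{-b}x_{\p^c}^{-b}$ then has $p$-adic type $(a-b,0)$, so its local component $\Xi'_{\p^c}$ at $\p^c$ is of finite order.

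The construction of the remaining data is forced. Using the canonical isomorphisms $\cO_{\p^c}^\times\xleftarrow{\sim}\ZZ_p^\times\xrightarrow{\sim}\cO_\p^\times$ of \eqref{eqn:identifyinglocalfactorsarpandpc}, I would regard $\Xi'_{\p^c}$ as a Dirichlet character $\mu$ of $p$-power conductor, and define
$$\rho(\Xi)(x):=\Xi'(x)\cdot\big(\mu\circ\NN_{K/\QQ}(x)\big)^{-1}=\Xi(x)\,x_\p^{-b}x_{\p^c}^{-b}\cdot\big(\mu\circ\NN_{K/\QQ}(x)\big)^{-1}.$$
Since the $\p^c$-component of $\mu\circ\NN_{K/\QQ}$ is exactly $\mu$ viewed on $\cO_{\p^c}^\times$, it cancels $\Xi'_{\p^c}$, so $\rho(\Xi)$ is unramified at $\p^c$; the $\p$-component is untouched and so $\rho(\Xi)$ retains $p$-adic type $(a-b,0)$. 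As $\rho(\Xi)$ has $\p$-power conductor coming from $\eta$ and is unramified at $\p^c$, it factors through $\Omega_\p$, which yields the final assertion.

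For uniqueness I would observe that the requirement that $\rho(\Xi)$ be unramified at $\p^c$ pins down the $\p^c$-component of the finite factor, hence pins down $\mu$ (whose $\p$- and $\p^c$-components agree under \eqref{eqn:identifyinglocalfactorsarpandpc}), and thereby $\rho(\Xi)$ itself; this is the $p$-adic shadow of the uniqueness in Lemma~\ref{lem:rigidityofheckechars}. Finally, the two displayed forms of the middle factor coincide because every character in sight factors through a quotient of conductor supported at $p$: at the primes above $p$ one has $\NN_{K/\QQ}(x_{\rm fin})=x_\p x_{\p^c}$ under $K_\p\cong\QQ_p\cong K_{\p^c}$, while the contribution of the finite places away from $p$ is trivial on the domain, so $\NN_{K/\QQ}^b(x_{\rm fin})=x_\p^b x_{\p^c}^b$ there.

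The step I expect to require the most care is the bookkeeping rather than the formal factorization: verifying that $\Xi'_{\p^c}$ is genuinely of finite order (so that $\mu$ is a bona fide finite Dirichlet character of $p$-power conductor, rather than a character still carrying weight at $\p^c$), and checking that under \eqref{eqn:identifyinglocalfactorsarpandpc} the $\p$- and $\p^c$-components of $\mu\circ\NN_{K/\QQ}$ are consistent, so that subtracting $\mu\circ\NN_{K/\QQ}$ removes precisely the ramification of $\Xi'$ at $\p^c$ while leaving the $\p$-component undisturbed. Reconciling the two forms of the middle factor is the other delicate point, and it hinges on the conductors being supported at $p$. Granting these, the uniqueness follows formally from the normalization that $\rho(\Xi)$ be unramified at $\p^c$, exactly as in Lemma~\ref{lem:rigidityofheckechars}.
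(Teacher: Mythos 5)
Your proposal is correct and follows essentially the same route as the paper's (much terser) proof: uniqueness forces $\mu$ to be the finite-order part of the local component $\Xi_{\p^c}$ (transported via \eqref{eqn:identifyinglocalfactorsarpandpc}), $\rho(\Xi)$ is then the quotient character, and the factorization through $\Omega_\p$ is deduced exactly as in the second part of Lemma~\ref{lem:rigidityofheckechars}. One harmless imprecision: dividing by $\mu\circ\NN_{K/\QQ}$ does alter the $\p$-component as well (by the finite-order character $\mu$ viewed on $\cO_\p^\times$), so it is not literally ``untouched''; but since $\mu$ has finite order this changes nothing about the $p$-adic type $(a-b,0)$ or any of the stated conclusions.
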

\begin{remark}
Lemma~\ref{lemma:factorpadiccharacters} allows us to identify $\widetilde{\frak{Z}}$ with the rigid analytic space $\textup{Sp} \,\LL^\dagger_L(\Omega_\p)\times \textup{Sp} \,\LL^\dagger_L(\Omega_\cyc^\circ)$.
\end{remark}
\begin{defn}
Let $\Xi$ be a Hecke character (algebraic or $p$-adic). We define the Hecke character dual to $\Xi$ by setting $\Xi^{D}:=(\Xi^{c})^{-1}$. If $\Xi$ has conductor $\ff(\Xi)$ and type $(a,b)$, then $\Xi^D$ has conductor $\frak{f}(\Xi)^c$ and type $(-b,-a)$.
\end{defn}
\begin{defn}
We let $\frak{Z}_{\ac}$ denote the space of anticyclotomic $p$-adic Hecke characters $($in more precise terms, it consists of those characters $\Psi\in \frak{Z}$ such that $\Psi^D=\Psi$$)$. 
\end{defn}

Note that we may identify $\frak{Z}_{\ac}$ with the rigid analytic space $\textup{Sp} \,\LL_{L}^\dagger(\Omega_\ac)$.
\subsection{Theta-series attached to algebraic Hecke characters of $K$}

\begin{defn}
\label{def:thetaseries}
\textup{(1)} Given an algebraic Hecke character $\Xi$ of $\infty$-type $(-u,0)$ with $u\geq 0$ and conductor $\ff(\Xi)$, we let 
$$\Theta(\Xi):=\sum_{(\frak{a},\frak{f}(\Xi))=1}\Xi(\frak{a})q^{\mathbb{N}_{K/\QQ}\frak{a}}$$ 
denote the associated theta-series.
\\
\textup{(2)} More generally, let $\Xi$ be an algebraic Hecke character of $\infty$-type $(a,b)$ with $a\le b$. Let $\eta_\Xi$ denote the unique Dirichlet character (of $\QQ$) of conductor $\mathbb{N}\ff(\rho(\Xi))$ which is characterized by the property that 
$$\eta_\Xi(n)=\rho(\Xi)((n))\cdot n^{a-b}$$
for all integers $n$ prime to $\mathbb{N}_{K/\QQ}\ff(\rho(\Xi))$. We write $\epsilon_K:=\left(\frac{D_K}{\cdot}\right)$ for the quadratic Dirichlet character attached to $K$. We set $N_\Xi:=D_K\cdot\mathbb{N}_{K/\QQ}\ff(\rho(\Xi))$ and $\theta_\Xi:=\eta_\Xi\epsilon_K$ (which is a Dirichlet character of conductor $N_\Xi$). Finally, we let 
$$g_\Xi=\Theta^{\textup{ord}}(\rho(\Xi)):=\sum_{(\frak{a},\frak{f}(\rho(\Xi))\p)=1}\rho(\Xi)(\frak{a})q^{{\NN_{K/\QQ}\frak{a}}}  \in S_{b-a+1}(\Gamma_1(N_\Xi),\theta_\Xi)$$
denote the associated $p$-ordinary theta series which is an eigenform of indicated weight, level and nebentype; it is a newform if and only if the conductor of $\rho(\Xi)$ is divisible by $\fp$. We also let $g^{[p]}_\Xi=\Theta^{[p]}(\rho(\Xi))$ denote its $p$-depletion and set 
$$g_\Xi^\circ=\Theta(\rho(\Xi)):=\sum_{(\frak{a},\frak{f}(\rho(\Xi)))=1}\rho(\Xi)(\frak{a})q^{{\NN_{K/\QQ}\frak{a}}}$$
so that $g_\Xi$ is the $p$-ordinary stabilization of the newform $g_\Xi^{\circ}$ whenever the conductor of $\rho(\Xi)$ is prime to $\fp$. 
\end{defn}

\begin{remark}
\label{rem:idetifysigmawithinthenearlyorddeformation}
Thanks to Lemma~\ref{lem:rigidityofheckechars}, choosing an algebraic Hecke character $\Xi\in \widetilde{\Sigma}_+$ (of $\infty$-type $(a,b)$, say) amounts to a choice of a triple $(\Theta(\chi\Xi_0),\eta,j)$, where $\Xi_0=\rho(\Xi)\in \Sigma_+$ is unramified at $\fp^c$ with $\infty$-type $(-u,0)$ where $u=b-a\geq 0$, $\eta=\mu_\Xi$ is a Dirichlet character of $\QQ$ of $p$-power conductor and $1\leq j=b+k/2 \leq k-1$ is an integer. 
\end{remark}
\begin{remark}\label{rem_demi_crys_calculations}
Suppose $\Xi=\chi\Psi$ is a Hecke character where $\Psi\in \widetilde{\Sigma}_+$ and $\chi$ is a ring class character modulo $\frak{f}$. As noted in Remark~\ref{rem_rhoXiforgeneralXi}, we have $\rho(\Xi)=\chi\rho(\Psi)$ and in turn (since $\chi$ is anticyclotomic, we have $\chi((n))=1$ for every non-zero $n\in \ZZ$), $\theta_\Xi=\theta_\Psi$. Moreover, $\Xi$ is demi-crystalline if and only if $\Psi$ is. The explicit description of the $p$-adic avatar of $\rho(\Psi)$ in \eqref{eqn_rho_exp_padic_avatar} tells us that when this is the case, $\eta_{\Psi}={\mathds{1}}$ is the trivial character and hence
$$\theta_\Xi=\theta_\Psi=\epsilon_K\,.$$
\end{remark}
The formalism in Definition~\ref{def:thetaseries} also applies to the dual Hecke character $\Xi^D$. As above, we may use Lemma~\ref{lem:rigidityofheckechars} to write $\Xi^D=\rho(\Xi^D)\,|\cdot|^{-a}(\mu_{\Xi^D}\circ \NN_{K/\QQ})$ (we caution the readers that $\rho(\Xi^D)\neq \rho(\Xi)^D$) and consider the associated $p$-depleted twisted theta-series 
\begin{align}
\label{eqn:maindualitycalculation}
\notag g^{[p]}_{\Xi^D}\otimes \mu_{\Xi^D}:=\Theta^{[p]}(\rho(\Xi^D)\cdot\mu_{\Xi^D}\circ\NN_{K/\QQ})
&=\sum_{(\frak{a},\frak{f}^cp)=1}\rho(\Xi^D)(\frak{a})\mu_{\Xi^D}(\NN_{K/\QQ}\frak{a})q^{\NN_{K/\QQ}\frak{a}}\\
\notag&=\sum_{(\frak{a},\frak{f}^cp)=1}\Xi^D(\frak{a})\NN_{K/\QQ}\frak{a}^{-a}q^{\NN_{K/\QQ}\frak{a}}\\
&=\sum_{(\frak{a},\frak{f}^cp)=1}\Xi(\frak{a}^{c})^{-1}\NN_{K/\QQ}\frak{a}^{-a}q^{\NN_{K/\QQ}\frak{a}}\\
\notag&=\sum_{(\frak{a},\frak{f}^cp)=1}\Xi(\frak{a}^{c})^{-1}(\NN_{K/\QQ}\frak{a}^c)^{-a}q^{\NN_{K/\QQ}(\frak{a}^c)}\\
\notag&=\sum_{(\frak{b},\frak{f}p)=1}\Xi(\frak{b})^{-1}\NN_{K/\QQ}(\frak{b})^{-a}q^{\NN_{K/\QQ}(\frak{b})}\\ 
\notag&=\overline{\Theta}^{[p]}(\rho(\Xi)\cdot\mu_\Xi\circ\NN_{K/\QQ})=\overline{g}^{[p]}_\Xi\otimes\mu_{\Xi}^{-1},
\end{align}
where the equality before the last one follows from the fact that $\overline{\Xi}(\frak{b})\Xi(\frak{b})=(\NN_{K/\QQ}\mathfrak{b})^{-a-b}$.
 \begin{defn}
We say that an algebraic Hecke character $\Xi$ of conductor dividing $\ff p^\infty$ is \emph{demi-crystalline} if the the character $\rho(\Xi)$ is crystalline. We say that it is \emph{crystalline} if it is demi-crystalline and $\mu_\Xi$ is trivial.
\end{defn}

\begin{remark}
\label{rem:crystallinemembersofthehidafamilyG}
Let $\rho(\Xi) \in{\Sigma}_{+}$ be an unramified algebraic Hecke character of $\infty$-type $(-u,0)$ with $u\equiv 0\mod p-1$ (whose $p$-adic avatar necessarily factors through $\Omega_\fp$). Then the $p$-stabilized eigenform $g_{\chi\rho(\Xi)} \in S_{u+1}(\Gamma_1(N_\chi p),\epsilon_K)$ is the unique crystalline weight $u+1$ specialization of the CM branch $\g$ of the Hida family interpolating $\{g_{\chi\Psi}\}_{\Psi\in {\Sigma}_{+}}$. It is not hard to see that all crystalline specializations arise in this manner. We further remark that the eigenform $g_{\chi\rho(\Xi)}$ is the $p$-stabilization of the newform $g_{\chi\rho(\Xi)}^\circ\in S_{u+1}(\Gamma_1(N_\chi),\epsilon_K) $.
\end{remark}
\begin{remark}
\label{rem:crysCMpointsduality}
Suppose in this remark that $\chi^D=\chi$. Let $\Xi$ be a demi-crystalline Hecke character of conductor dividing $\ff p^\infty$. The computation (\ref{eqn:maindualitycalculation}) above carries over for the theta-series $\Theta(\chi\rho(\Xi))$ and shows that 
$$g_{\chi\Xi^D}^\circ\otimes\mu_{\Xi^{D}}=\Theta(\chi\rho(\Xi^D))\otimes\mu_\Xi^{D}=\overline{\Theta}(\chi\rho(\Xi))=\overline{g}_{\chi\Xi}^\circ\otimes\mu_\Xi^{-1}\,.$$
Moreover,
\begin{equation}
\label{eqn:thedualityintwistedformunratp}
g_{\chi\Xi^D}^\circ=\overline{g}_{\chi\Xi}^\circ=g_{\chi\Xi}^\circ\otimes \theta^{-1}_{\chi\Xi,N_0},
\end{equation}
where $N_0$ is the prime-to-$p$ part of $N_{\chi\Xi}$ and $\theta_{\chi\Xi,N_{0}}$ denotes the prime-to-$p$ component of the nebentype $\theta_{\chi\Xi}$ (equivalently, it is the nebentype character for $g_{\chi\Xi}^\circ$). Indeed, we may verify the second (well-known) equality by comparing the corresponding Euler factors in the associated Hecke $L$-series of both sides. 
We also note that
\begin{equation}
\label{eqn:thedualityintwistedforforpstabilizedforms}
g_{\chi\Xi^D}=g_{\chi\Xi}\otimes \theta^{-1}_{\chi\Xi,N_0}
\end{equation}
for the $p$-stabilized theta series. Indeed, the fact that the Hecke eigenvalues of the eigenforms $\overline{g}_{\chi\Xi}$ and $g_{\chi\Xi}\otimes \theta^{-1}_{\chi\Xi,N_0}$ agree away from $p$ is well-known $($c.f., Remark 2.5 of \cite{loefflerERL}$)$. Moreover, a calculation similar to (\ref{eqn:maindualitycalculation}) shows that the same holds true for the pair $\overline{g}_{\chi\Xi}$ and ${g}_{\chi\Xi^D}$. It therefore remains to check that the $U_p$-eigenvalues acting on both sides are equal. This is verified through a direct computation:
\begin{align*}
\chi(\p^c)\rho(\Xi^D)(\p^c)&=\chi(\p^c)\cdot\rho(\Xi)^{-1}(\p)\cdot p^{-u}\\
&=\chi(\p^c)\cdot\rho(\Xi)(\p^c)\cdot\rho(\Xi)^{-1}((p))\cdot p^{-u}\\
&=\chi(\p^c)\cdot\rho(\Xi)(\p^c)\cdot \theta^{-1}_{\chi\Xi,N_0}(p),
\end{align*}
where the first equality is valid because $\rho(\Xi)$ is unramified at $\p$, so that $\rho(\Xi)^D|\cdot|^{-u}=\rho(\Xi^D)$.
\end{remark}

\subsection{Functional Equation for Rankin-Selberg $L$-functions} 
\label{subsec:functionalequationforcomplexLseries}
We will revisit the work of Li~\cite{WinnieLi79} and recast the functional equation she has established for the Rankin-Selberg $L$-functions in a form suitable for our purposes.

Fix an integer $\kappa\geq 2$ and a demi-crystalline algebraic Hecke character $\Psi \in \Sigma$ of $\infty$-type $(a,b)$ with 
$$1-k/2\leq a\leq b\leq \kappa-k/2-1$$ 
 and $\mu_\Psi\neq \mathbbm{1}$. Throughout, we let $h\in S_{\kappa}(\Gamma_1(N_f),\varepsilon_h)$ denote a newform (we will later choose the newform $h$ such that its $p$-stabilization is a member of a Coleman family through $f^\alpha$) and consider the newform 
 $$g=g_{\chi\Psi}\otimes \mu_\Psi^{-1}:=\Theta(\chi\rho(\Psi)\cdot\mu_\Psi^{-1}\circ \NN_{K/\QQ})=\Theta^{[p]}(\chi\rho(\Psi)\cdot\mu_\Psi^{-1}\circ \NN_{K/\QQ}) \in  S_{b-a+1}(\Gamma_1(N),\theta),$$ 
where $N=N_{\chi\Psi}$ and $\theta=\theta_{\chi\Psi}\mu_{\Psi}^{-2}$. Note that the equality follows from our assumption that $\mu_\Psi\neq\mathbb{1}$. As explained in \cite[Remark~2.2]{loefflerERL}, we have
$$L(h,g,s)=L^{\textup{imp}}(h,g_{\chi\Psi},\mu_\Psi^{-1},s)$$
for the motivic $L$-function $L(h,g,s)$ and the imprimitive Rankin-Selberg $L$-function $L^{\rm imp}(h,g_{\chi\Psi},\mu_\Psi^{-1},s)$ (see Definition 2.1 of op. cit. for the definition of the latter). Loeffler's formula~\cite[Prop. 2.12]{loefflerERL} allows us to interpolate $L$-values in this form. 
\begin{defn}\label{defn_root_number_good_cases}
We define the completed $L$-series $\LL_{h,g}(s)$ by setting 
$$\LL_{h,g}(s):=(2\pi)^{-2s+\kappa+b-a-1}\Gamma(s-\kappa+1)\Gamma(s)L^{\rm imp}(f,h,s)\,.$$
When $N_f$ and $N$ are coprime, we also define the global root number $\mathscr{W}(s)$ as
$$\mathscr{W}(s):=(N_fN)^{-2s+\kappa+b-a}\theta(-1)\varepsilon_h(N)\overline{\theta}(N_f)\lambda_{N_f}(h)^2\lambda_{N}(g)^2$$
where $\lambda_{N_f}(h)$ and $\lambda_{N}(g)$ are the Atkin-Lehner pseudo-eigenvalues.
\end{defn}
\begin{remark}
It is of course possible to define global root number when $N_f$ and $N$ have common prime divisiors. However, the explicit formula for the root number is far more complicated $($see (2.11) of \cite{WinnieLi79}$)$ and we shall be content to treat only the case when $N_f$ and $N$ are coprime. In particular, the conductor of $\chi$ is prime to $N_f$ and hence, whenever we would like to insist that $\epsilon(f/K,\eta)=\pm1$, we are forced to assume for the nebentype character is trivial. This is the only place where our assumption on the nebentype for $f$ plays an essential role. 
\end{remark}

The following is a restatement of Theorem~2.2 of \cite{WinnieLi79}, in view of Example 2 in \emph{op. cit}.
\begin{theorem}
\label{thm:complexfunctionalequation}
Suppose $(N_f,N)=1$. Then,
$$\LL_{h,g}(s)=\mathscr{W}(s)\LL_{\overline{h},\overline{g}}(\kappa+b-a-s).$$
\end{theorem}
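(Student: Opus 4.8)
The plan is to deduce this statement directly from Li's functional equation (Theorem~2.2 of \cite{WinnieLi79}), so the real work is a careful translation and bookkeeping exercise rather than a new analytic argument. First I would recall Li's theorem in its original shape: for two newforms she relates the completed Rankin-Selberg convolution $\Lambda(s)$ to $\Lambda^*(w-s)$ (where $w$ is the center-symmetric shift determined by the weights) through an explicit constant involving the conductors, the values of the nebentype characters at $-1$ and at the relevant conductors, and the Atkin-Lehner pseudo-eigenvalues $\lambda(h)$, $\lambda(g)$. The key is to match her normalization of the completed $L$-function and her gamma-factor conventions with the definition of $\LL_{h,g}(s)$ given just above: the exponent $-2s+\kappa+b-a+1$ on $(2\pi)$ and the two gamma factors $\Gamma(s-\kappa+1)\Gamma(s)$ must be shown to coincide with Li's archimedean factor for the pair $(h,g)$ of weights $(\kappa, b-a+1)$, so that $\LL_{h,g}(s)$ is precisely her $\Lambda(s)$ up to a normalization that is symmetric under $s\mapsto \kappa+b-a-s$.

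Second, I would verify that the reflection point is the claimed one. Li's functional equation sends $s\mapsto w-s$ with $w=\kappa+b-a$ (the sum of the two weights minus one, here $\kappa + (b-a+1) - 1$); substituting confirms that the right-hand side is evaluated at $\kappa+b-a-s$, matching the statement. I would then check that the dual pair appearing on the right is $(\overline{h},\overline{g})$: Li's theorem naturally produces complex-conjugate (equivalently, dual/nebentype-twisted) newforms on the other side, and since $h$ and $g$ here have real/algebraic Fourier coefficients twisted by their nebentypes, the forms $\overline{h},\overline{g}$ are exactly the duals. This is where the identities \eqref{eqn:thedualityintwistedformunratp} and the surrounding duality computations become relevant for interpreting $\overline{g}$.

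Third, and this is where the main obstacle lies, I would pin down the explicit root number $\mathscr{W}(s)$. Li's constant in the coprime-conductor case (her Example~2, which specializes the general formula (2.11)) is a product of a power of the product of conductors $(N_fN)^{-2s+\kappa+b-a}$, character values $\theta(-1)\varepsilon_h(N)\overline{\theta}(N_f)$, and the squared Atkin-Lehner pseudo-eigenvalues $\lambda_{N_f}(h)^2\lambda_{N_f}(g)^2$. The delicate part is getting every character value evaluated at the correct argument and confirming that the nebentype $\theta$ of $g$ (recall $\theta=\theta_{\chi\Psi}\mu_\Psi^{-2}$) and $\varepsilon_h$ of $h$ enter with the exponents and conjugations that Li's formula dictates; a sign or a misplaced conjugate here would be fatal. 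Because we have restricted to $(N_f,N)=1$, the elaborate local root-number contributions at shared primes in (2.11) drop out, which is precisely why the clean product above suffices. I would therefore carefully transcribe Li's Example~2 formula, substitute the weights $(\kappa, b-a+1)$ and conductors $(N_f, N)$, and check term-by-term that it reproduces $\mathscr{W}(s)$; modulo this normalization bookkeeping the theorem follows immediately. The only genuine subtlety beyond transcription is confirming that the impromptu (imprimitive) $L$-function $L^{\mathrm{imp}}(f,h,s)$ used in the definition of $\LL_{h,g}$ agrees with the primitive Rankin-Selberg $L$-function to which Li's theorem applies, but since $(N_f,N)=1$ the imprimitivity affects only Euler factors that are accounted for symmetrically on both sides.
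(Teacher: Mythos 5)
Your proposal takes essentially the same route as the paper: the paper offers no independent proof of this theorem, stating only that it is a restatement of Theorem~2.2 of Li \cite{WinnieLi79} specialized via her Example~2 to the coprime-level case, which is exactly the transcription-and-normalization exercise you describe (matching gamma factors, identifying the reflection point $\kappa+b-a$, and reading off $\mathscr{W}(s)$ from Example~2). Your additional attention to the dual pair $(\overline{h},\overline{g})$ and to the imprimitive versus primitive $L$-function is sound bookkeeping of precisely the kind the paper leaves implicit.
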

In more explicit form, 
\begin{align*}
{(2\pi)^{-2s+\kappa+b-a-1}}\Gamma(s-\kappa+1)\Gamma(s)L^{\rm imp}(h,g,s)=&(N_fN)^{-2s+\kappa+b-a}\theta(-1)\varepsilon_h(N)\overline{\theta}(N_f)\lambda_{N_f}(h)^2\lambda_{N}(g)^2\\
&{\times(2\pi)^{2s-\kappa+a-b-1}}\Gamma(b-a+1-s)\Gamma(\kappa+b-a-s)\\
& \times L^{\rm imp}(\overline{h},\overline{g},\kappa+b-a-s)\,.
\end{align*}
We shall need this identity evaluated at $s=b+k/2$, which  reduces to
\begin{align*}
(-2\pi)^{\kappa-k-a-b}(a+k/2-1)!(b+k/2-1)!L^{\rm imp}(h,g,b+k/2)&=(N_fN)^{\kappa-k-a-b}\theta(-1)\varepsilon_h(N)\overline{\theta}(N_f)\\
&\times\lambda_{N_f}(h)^2\lambda_{N}(g)^2(2\pi)^{a+b-\kappa+k}\\
& \times (\kappa-k/2-a-1)!(\kappa-k/2-b-1)!\\
& \times L^{\rm imp}(\overline{h},\overline{g},\kappa-k/2-a)\,.
\end{align*}
Thence,
\begin{align}
\label{eqn:functionalequationtheratio}
\frac{L^{\rm imp}(h,g,b+k/2)}{L^{\rm imp}(\overline{h},\overline{g},\kappa-k/2-a)}=(-N_fN)^{\kappa-k-a-b}&\theta(-1)\varepsilon_h(N)\overline{\theta}(N_f)\lambda_{N_f}(h)^2\lambda_{N}(g)^2\\
&\notag\times(2\pi)^{2(a+b)-2(\kappa-k)}\frac{(\kappa-k/2-a-1)!(\kappa-k/2-b-1)!}{(a+k/2-1)!(b+k/2-1)!}\,.
\end{align}

\section{Functional equation for the three variable geometric $p$-adic $L$-function}
\label{sec:threevarfunctionaleqn}
Our goal of this section is to deduce a functional equation for the Beilinson-Flach $p$-adic $L$-function attached to $f^\alpha$. In order to do that, we need to deform $f^\alpha$ in a Coleman family of eigenforms and consider the restriction of a CM Hida family interpolating $\left\{g_{\chi\Psi}\right\}_{\Psi\in \Sigma_{+}}$ to an affinoid of the corresponding weight space. Throughout this section, we assume that $N_f$ and $pD_K\NN_{K/\QQ}(\ff)$ are coprime.
\begin{remark}
\label{rem:noeasywayout}
A more direct approach could have been possible (without the need to deform $f^\alpha$ to a Coleman family) whenever the following assertion holds:\\\\
Suppose for an element $\mathscr{G}\in\cH_{s_\alpha,s_\alpha}(\Gamma)$ we have $\mathscr{G}(\Psi)=0$ for every Hecke character $\Psi \in \Sigma_{+}(k)$. Then $\mathscr{G}$ is identically $0$.

When $s_\alpha<1$, this claim follows from the work of Loeffler $($see  \cite[Theorem~9]{LoefflerHeidelberg2012}$)$. Otherwise, we do not know how to verify this statement.
\end{remark}

Throughout this section, we assume that
\begin{equation}
\chi^D=\chi.\label{eq:chiD}
\end{equation}
Let $\f=\sum_{n=1}^{\infty}a_n(\f)q^n$ denote a Coleman family which is new of tame level $N_f$ and nebentype $\omega^k$. We suppose that $\f$ is defined over some affinoid neighborhood $\mathscr{X}$ in the weight space 
and its weight $k$ specialization is $f^\alpha$. Note that we retain our hypothesis that $\varepsilon_f=\mathbb{1}$, so that the prime-to-$p$ part of the nebentype of $\f$ is trivial and the conjugate Coleman family $\f^D$ (whose defining property is given as in Lemma 3.4 of \cite{loefflerERL}) coincides with $\f$. 

We identify the branch of the weight space corresponding to the Hida family $\g$ interpolating $\{g_{\chi\Psi}\}_{\Psi\in {\Sigma}_{+}}$ with ${\rm{Spf}}\, \LL_{L}(\Omega_\p)$ (c.f. the discussion on pages 2157-2158 in \cite{ghatevatsal2004AIF}). 
Notice that the tame level of $\g$ equals $N_\chi:=D_K\NN_{K/\QQ}\ff $ and its nebentype is $\theta_\chi\omega=\epsilon_K\omega$ thanks to \eqref{eq:chiD}, where $\omega$ is the $p$-adic Teichm\"uller character. 

Fix an affinoid $\mathscr{Y}\subset \textup{Sp}\,\LL_{L}^\dagger(\Omega_\p)$, which is stable under the map induced by $\gamma\mapsto \gamma^{-1}$ on $\Omega_\p$. We shall consider the restriction $\g\vert_\mathscr{Y}$ of the family $\g$ to $\mathscr{Y}$. 
\begin{defn}
\label{def:geometricpadicLfunction}
Let 
$$L_p^{\rm geom}(\f,\g\vert_\mathscr{Y})\in \cO(\mathscr{X})\widehat{\otimes}\cO(\mathscr{Y})\widehat{\otimes}\LL_{L}^\dagger(\Omega_\cyc^\circ)$$ 
denote the geometric $p$-adic $L$-function given as in \cite[Theorem~6.1]{loefflerERL}. 
\end{defn}
\begin{defn}
Set ${\widetilde{\frak{Z}}}(\mathscr{Y})=\mathscr{Y}\times \textup{Sp} \,\LL_{L}^\dagger(\Omega_\cyc^\circ) \subset\textup{Sp} \,\LL_{L}^\dagger(\Omega_\p)\times  \textup{Sp} \,\LL_{L}^\dagger(\Omega_\cyc^\circ)=:{\widetilde{\frak{Z}}}$. It consists of $p$-adic Hecke characters $\Xi\in \widetilde{\frak{Z}}$ whose factorization $\Xi(x)=\rho(\Xi)(x)\cdot x_\p^ax_{\p^c}^b\cdot\left(\mu_{\Xi}\circ \NN_{K/\QQ}x\right)$ given by Lemma~\ref{lemma:factorpadiccharacters} verifies $\rho(\Xi)\in \mathscr{Y}$. We define $\frak{Z}(\mathscr{Y})$ in a similar fashion.
\end{defn}

Given a $p$-adic Hecke character $\Psi\in \widetilde{\frak{Z}}(\mathscr{Y})$ and any specialization $\f(\kappa)$ of $\f$, we set 
$$L_p^{\rm geom}(\f,\g\vert_\mathscr{Y})\left(\f(\kappa),\Psi\right):=L_p^{\rm geom}(\f,\g\vert_\mathscr{Y})\left(\f(\kappa),g_{\chi\rho(\Psi)},b+k/2\right)$$
where $g_{\chi\rho(\Psi)}$ is the specialization of the family $\g$ corresponding to $\rho(\Psi) \in \mathscr{Y}$; it corresponds to a classical eigenform if and only if $\rho(\Psi)$ is the $p$-adic avatar of an algebraic Hecke character that belongs to $\Sigma_+$. This permits us to think of $L_p^{\rm geom}$ as an element of $ \cO(\mathscr{X})\,\widehat{\otimes}\,\cO(\widetilde{\frak{Z}}(\mathscr{Y}))$.

\begin{theorem}
\label{thm:mainfunctionaleqninthreevariables}
There exists an Iwasawa function $\mathscr{G}_f \in \LL_{\cO}(\Omega)$ so that 
$${L_p^{\rm geom}(\f,\g\vert_\mathscr{Y})(\f(\kappa),\Psi)}=\mathscr{G}_f(\Psi)\cdot \langle N_f^2 N_{\chi}\rangle^{\kappa-k}\cdot{L_p^{\rm geom}(\f,\g\vert_\mathscr{Y})(\f(\kappa),\Psi^D|\cdot|^{k-\kappa})}\,$$ 
for every $\Psi \in \widetilde{\frak{Z}}(\mathscr{Y})$ and $\kappa\in \mathscr{X}$. Moreover, when $\Psi\in \frak{Z}_\ac$ and $\f(\kappa)=f^\alpha$ (so that $\kappa=k$) we have $\mathscr{G}_f(\Psi)=\epsilon(f/K)\,.$
\end{theorem}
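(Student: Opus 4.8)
The plan is to pin down the identity at the dense set of arithmetic points supplied by the density of the $p$-adic avatars of $\Sigma_+^{(1)}$, to read off the proportionality factor there, and then to propagate everything by rigidity. First I would fix a classical weight $\kappa$ with $\f(\kappa)=h^\alpha$ the $p$-stabilization of a newform $h\in S_\kappa(\Gamma_1(N_f))$, together with an algebraic $\Psi\in\Sigma_+^{(1)}$ of $\infty$-type $(a,b)$ lying in the admissible range $1-k/2\le a\le b\le\kappa-k/2-1$, and set $g=g_{\chi\Psi}\otimes\mu_\Psi^{-1}$. By Loeffler's interpolation formula \cite[Prop.~2.12]{loefflerERL}, \cite[Thm.~6.1]{loefflerERL}, the value $L_p^{\rm geom}(\f,\g\vert_\mathscr{Y})(\f(\kappa),\Psi)$ equals $L^{\rm imp}(h,g,b+k/2)=L(h,g,b+k/2)$ times an explicit factor $C(\kappa,\Psi)$ assembled from the period, the $p$-Euler factors and the archimedean $\Gamma$-factors recorded there. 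I would then feed this into the complex functional equation in its ratio form \eqref{eqn:functionalequationtheratio}, which rewrites the quotient $L^{\rm imp}(h,g,b+k/2)/L^{\rm imp}(\overline h,\overline g,\kappa-k/2-a)$ in terms of $\mathscr{W}$, i.e.\ of the conductor power $(-N_fN)^{\kappa-k-a-b}$, the sign $\theta(-1)\varepsilon_h(N)\overline\theta(N_f)$, the Atkin--Lehner pseudo-eigenvalues, and a quotient of $\Gamma$-values and powers of $2\pi$.

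The point is that the dual $L$-value on the right is itself interpolated by $L_p^{\rm geom}$. Since $\varepsilon_f=\mathbb{1}$ we have $\f^D=\f$, whence $\overline h=h$; and by the duality computations of Remark~\ref{rem:crysCMpointsduality}, in particular \eqref{eqn:thedualityintwistedforforpstabilizedforms}, the form $\overline g$ is exactly the theta-series attached to $\Psi^D$ up to the nebentype twist already built into $\g^D=\g$. Matching the arithmetic of the dual point, the complex evaluation at $s=\kappa-k/2-a$ coincides with the Loeffler interpolation point of $L_p^{\rm geom}(\f(\kappa),\Psi^D|\cdot|^{k-\kappa})$: indeed $\Psi^D$ has $\infty$-type $(-b,-a)$, whose natural abscissa is $(-a)+k/2$, and the twist by $|\cdot|^{k-\kappa}$ shifts this by $\kappa-k$ to land precisely at $\kappa-k/2-a$. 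Applying the interpolation formula a second time at this dual point expresses $L^{\rm imp}(\overline h,\overline g,\kappa-k/2-a)$ as $L_p^{\rm geom}(\f(\kappa),\Psi^D|\cdot|^{k-\kappa})$ times $C(\kappa,\Psi^D|\cdot|^{k-\kappa})$; dividing, the transcendental periods and $\Gamma$-factors from the two interpolation formulas cancel against those in \eqref{eqn:functionalequationtheratio}, leaving a purely algebraic expression.

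This yields, at every such arithmetic point,
\[
\frac{L_p^{\rm geom}(\f,\g\vert_\mathscr{Y})(\f(\kappa),\Psi)}{L_p^{\rm geom}(\f,\g)(\f(\kappa),\Psi^D|\cdot|^{k-\kappa})}=\langle N_f^2 N_\chi\rangle^{\kappa-k}\,\mathscr{G}_f(\Psi),
\]
and the essential bookkeeping is to verify that the $\kappa$-dependence of $\mathscr{W}$ together with the ratio $C(\kappa,\Psi)/C(\kappa,\Psi^D|\cdot|^{k-\kappa})$ is concentrated entirely in the conductor power $\langle N_f^2N_\chi\rangle^{\kappa-k}$ (plus a $\kappa$-free sign). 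The surviving data — the nebentype sign, the Atkin--Lehner pseudo-eigenvalues of $g$, and the $p$-adic interpolation units — must then assemble into a bounded, hence integral, function of $\Psi$ alone, so that $\mathscr{G}_f\in\LL_{\cO_L}(\Omega)$. Here I would invoke the reduction-to-ordinary trick: the pseudo-eigenvalues and interpolation factors in play are those of the \emph{ordinary} CM family $\g$, whose integrality and $\kappa$-independence are governed by Hida theory, and it is in this ordinary comparison that one reads off both properties. Density of the $p$-adic avatars of $\Sigma_+^{(1)}$ (with $\kappa$ varying) then upgrades the identity to one of rigid analytic functions on all of $\mathscr{X}\times\widetilde{\frak{Z}}(\mathscr{Y})$.

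For the final assertion, take $\kappa=k$ and $\Psi\in\frak{Z}_\ac$. Then $\langle N_f^2N_\chi\rangle^{\kappa-k}=1$ while $\Psi^D|\cdot|^{k-\kappa}=\Psi^D=\Psi$ by the defining self-duality of anticyclotomic characters, so the functional equation collapses to $L_p^{\rm geom}(f^\alpha,\Psi)=\mathscr{G}_f(\Psi)\,L_p^{\rm geom}(f^\alpha,\Psi)$, forcing $\mathscr{G}_f(\Psi)$ to be the global sign of the equation. It then remains to evaluate the explicit formula for $\mathscr{G}_f(\Psi)$ at this self-dual central point and match it with $\epsilon(f/K)$: since $\mathscr{W}$ is built from exactly the conductor, nebentype and Atkin--Lehner data that also compute the local root numbers of $L(f/K\otimes\eta,s)$, the product must telescope to the global root number $\epsilon(f/K)$, which is independent of $\eta$ as noted in the set-up. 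I expect this last matching — reconciling the self-dual specialization of the interpolation factors with the classical $\varepsilon$-factor computation — to be the main obstacle, alongside the integrality and $\kappa$-independence bookkeeping that pins $\mathscr{G}_f$ inside the Iwasawa algebra.
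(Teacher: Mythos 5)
Your first half tracks the paper's proof exactly: evaluate at demi-crystalline pairs $(\f(\kappa),\Psi)$ via Loeffler's interpolation formula, note that $(\f(\kappa),\Psi^D)$ is again such a pair, use Remark~\ref{rem:crysCMpointsduality} to identify $\overline{g}$ with the theta-series of $\Psi^D$, and feed the two interpolation formulas into the ratio form \eqref{eqn:functionalequationtheratio} of Li's functional equation so that periods and $\Gamma$-factors cancel. That is precisely how the paper arrives at the explicit expression for the fudge factor. The genuine gap is in the second half, and it concerns exactly the two points you flag as difficulties. First, your route to $\mathscr{G}_f\in\LL_{\cO_L}(\Omega)$ is an appeal to ``Hida theory of the ordinary CM family $\g$,'' but the surviving factor is not a specialization of any Hida-theoretic object in an evident way: it involves the ratio of Gauss sums $G(\mu_\Psi)^2/G(\overline{\mu}_\Psi)^2$, the power $(N_fN_\chi)^{-a-b}$, nebentype values and the Atkin--Lehner pseudo-eigenvalues $\lambda_{N_f}(g)^2$ of the \emph{varying} theta-series, and one must show these values (defined only on a dense set of points) are interpolated by a single element of the Iwasawa algebra. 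Knowing the values are $p$-adic integers on a dense set does not by itself produce the interpolating function; one needs an a priori reason that such a rigid-analytic (indeed fractional-Iwasawa) interpolation exists. The paper's device --- the ``reduction to the ordinary case'' trick --- is different from what you propose: since $\mathscr{G}_f(\Psi)$ depends on $f$ only through $N_f$ and $\lambda_{N_f}(f)$, one chooses an auxiliary \emph{$p$-ordinary} eigenform $\frak{h}$ of level $N_f$, trivial nebentype and with $\epsilon(\frak{h}/K)=\epsilon(f/K)$, and invokes Nekov\'a\v{r}'s functional equation for the ordinary two-variable $p$-adic $L$-function $L_p(\frak{h}/K,\Sigma^{(1)})$; its fudge factor is an element of $\textup{Frac}\left(\LL_{\cO_L}(\Omega)\right)$ which coincides with $\mathscr{G}_f$ at the dense set of points, and integrality of the values then pins it inside $\LL_{\cO_L}(\Omega)$.

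Second, your argument for the anticyclotomic evaluation does not go through. At $\kappa=k$ and $\Psi\in\frak{Z}_\ac$ the identity $L_p^{\rm geom}(f^\alpha,\Psi)=\mathscr{G}_f(\Psi)\,L_p^{\rm geom}(f^\alpha,\Psi)$ forces $\mathscr{G}_f(\Psi)=1$ \emph{only when the $L$-value is nonzero}; in the interesting case $\epsilon(f/K)=-1$ the anticyclotomic values all vanish (that is the content of Corollary~\ref{cor:vanishingoftthefullanticyclotomicpadicLfunction}, which is \emph{deduced from} $\mathscr{G}_f^\ac=-1$), so the collapse argument yields nothing, and reading the sign off the $p$-adic functional equation itself would be circular. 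You do fall back on a direct $\varepsilon$-factor matching, but you correctly identify it as the main obstacle and leave it open --- whereas in the paper this is again what the auxiliary form $\frak{h}$ is for: the anticyclotomic specialization of Nekov\'a\v{r}'s functional equation for $\frak{h}$ is known to have sign $\epsilon(\frak{h}/K)=\epsilon(f/K)$, and since $\mathscr{G}_f=\mathscr{G}_{\frak{h}}$ this transfers to $f$ with no explicit root-number computation. So the missing idea is the comparison with an auxiliary ordinary form of the same level and root number; without it, both the membership $\mathscr{G}_f\in\LL_{\cO_L}(\Omega)$ and the identity $\mathscr{G}_f^\ac=\epsilon(f/K)$ remain unproven in your proposal.
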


\begin{proof}
We set  
\begin{align*}
{\Sigma}_{\rm cris}:=\{(\f(\kappa),\Psi): \f(\kappa) \hbox{ is classical} &\hbox{ of}  \hbox{ weight } \kappa\in \ZZ_{\geq 2} \hbox{ and } \Psi\in \Sigma \hbox{ is demi-crystalline with } \\
&\infty\hbox{-type }   (a,b),\, 1-k/2\leq a\leq b\leq \kappa- k/2-1  \hbox{ and } \mu_\Psi\neq \mathbbm{1}\}\subset \mathscr{X}\times \widetilde{\frak{Z}}(\mathscr{Y})
\end{align*} 
and observe that $(\f(\kappa),\Psi^D|\cdot|^{k-\kappa})$ belongs to ${\Sigma}_{\rm cris}$ whenever $(\f(\kappa),\Psi)$ does: Indeed, Remark~\ref{rem:crysCMpointsduality} shows (relying on the fact that $\rho(\Psi)$ is unramified) that $\rho(\Psi^D)=\rho(\Psi)^D|\cdot|^{b-a}$ is unramified and that $\mu_{\Psi^D}=\mu_\Psi^{-1}$. We let $\f(\kappa)^\circ$ denote the newform of level $N_f$ such that $\f(\kappa)$ is a $p$-stabilization of $\f(\kappa)^\circ$.

For $(\f(\kappa),\Psi)\in {\Sigma}_{\rm cris}$, we shall compute the ratio
\begin{equation}\label{eqn:trivialsimplification01}
\frac{L_p^{\rm{geom}}(\f,\g\vert_\mathscr{Y})(\f(\kappa),\Psi)}{L_p^{\rm{geom}}(\f,\g\vert_\mathscr{Y})(\f(\kappa),\Psi^D|\cdot|^{k-\kappa})}
\end{equation}
and prove that it interpolates in a desired manner,  as $(\f(\kappa),\Psi)$ over the dense subset ${\Sigma}_{\rm cris}\subset  \mathscr{X}\times \widetilde{\frak{Z}}(\mathscr{Y})$. In order to achieve that, we will make use of Loeffler's interpolation formula~\cite[Proposition 2.10 and Theorem 6.3]{loefflerERL}. 

It follows from the said interpolation formula that 
\begin{align*}
L_p^{\rm{geom}}(\f,\g\vert_\mathscr{Y})(\f(\kappa),\Psi)=\frac{\mathcal{E}(\f(\kappa),\Psi)}{\mathcal{E}(\f(\kappa))\,\mathcal{E}^*(\f(\kappa))}&\times\frac{(b+k/2-1)!(a+k/2-1)!\,i^{\kappa-b+a-1}}{\langle \f(\kappa)^\circ,\f(\kappa)^\circ\rangle_{N_f}\pi^{a+b+k}2^{a+b+k+\kappa-1}}\\
&\times L^{\rm{imp}}({\f}(\kappa)^\circ,g_{\chi\Psi}^\circ\otimes\mu_\Psi^{-1},b+k/2),
\end{align*}
(where $\mathcal{E}(\f(\kappa),\Psi)$ corresponds to the factor denoted by $\mathcal{E}(\f(\kappa),g_{\chi\Psi},b+k/2+\mu_\Psi)$ in op. cit. and we shall provide its explicit expression below) and that
\begin{align*}
L_p^{\rm{geom}}(\f,\g\vert_\mathscr{Y})(\f(\kappa),\Psi^D|\cdot|^{k-\kappa})=\frac{\mathcal{E}(\f(\kappa),\Psi^D|\cdot|^{k-\kappa})}{\mathcal{E}(\f(\kappa))\,\mathcal{E}^*(\f(\kappa))}&\times\frac{(\kappa-k/2-a-1)!(\kappa-k/2-b-1)!\,i^{\kappa-b+a-1}}{{\langle \f(\kappa)^\circ,\f(\kappa)^\circ\rangle_{N_f}}\pi^{2\kappa-a-b-k}2^{3\kappa-k-b-a-1}}\\
&\times L^{\rm{imp}}({\f}(\kappa)^\circ,g_{\chi\Psi^D}^\circ\otimes\mu_\Psi,\kappa-k/2-a),
\end{align*}
where $\mathcal{E}(\f(\kappa),\Psi^D|\cdot|^{k-\kappa})$ equals Loeffler's $\mathcal{E}(\f(\kappa),g_{\chi\Psi^D},\kappa-a-k/2-\mu_\Psi)$. We therefore have
\begin{align}
\label{eqn:theratioofpadicL01}
\frac{L_p^{\rm{geom}}(\f,\g\vert_\mathscr{Y})(\f(\kappa),\Psi)}{L_p^{\rm{geom}}(\f,\g\vert_\mathscr{Y})(\f(\kappa),\Psi^D|\cdot|^{k-\kappa})}=\frac{\mathcal{E}(\f(\kappa),\Psi)}{\mathcal{E}(\f(\kappa),\Psi^D|\cdot|^{k-\kappa})}&\times \frac{(b+k/2-1)!(a+k/2-1)!}{(\kappa-k/2-a-1)!(\kappa-k/2-b-1)!}\\
\notag&\times(2\pi)^{2(\kappa-k)-2(a+b)}\frac{L({\f}(\kappa)^\circ,g,b+k/2)}{L({\f}(\kappa)^\circ,\overline{g},\kappa-k/2-a)},
\end{align}
where we have set $g:=g_{\chi\Psi}^\circ\otimes\mu_\Psi^{-1}\in S_{b-a+1}(\Gamma_1(N_{\chi\Psi}),\theta_{\chi\Psi}\mu_\Psi^{-2})$ to be the newform we considered in Section~\ref{subsec:functionalequationforcomplexLseries} and we have used Remark~\ref{rem:crysCMpointsduality} to identify $\overline{g}=\overline{g}_{\chi\Psi}^\circ\otimes\mu_\Psi$ with $g_{\chi\Psi^D}^\circ\otimes\mu_\Psi$.  Let $p^r>1$ denote the conductor of $\mu_\Psi$ so that $N_{\chi\Psi}=N_\chi p^{2r}$. 
We recall that $\varepsilon_f=\mathbbm{1}=\chi\chi^c$ and $\f(\kappa)^\circ=\overline{\f(\kappa)}^\circ$ in our set up (since we assumed that $(N_{\chi},N_f)=1$ in order to make use of Theorem~\ref{thm:complexfunctionalequation}).  Combining (\ref{eqn:functionalequationtheratio}) and (\ref{eqn:theratioofpadicL01}) along with the fact that $\theta_{\chi\Psi}(-1)=\epsilon_K(-1)=-1$ (as we have observed in Remark~\ref{rem_demi_crys_calculations}), 
we conclude that
\begin{align}
\label{eqn:theratioofpadicLsimplified}
\notag\frac{L_p^{\rm{geom}}(\f,\g\vert_\mathscr{Y})(\f(\kappa),\Psi)}{L_p^{\rm{geom}}(\f,\g\vert_\mathscr{Y})(\f(\kappa),\Psi^D|\cdot|^{k-\kappa})}= 
{-}&\,\frac{\mathcal{E}(\f(\kappa),\Psi)}{\mathcal{E}(\f(\kappa),\Psi^D|\cdot|^{k-\kappa})}\times (-N_fN_{\chi}p^{2r})^{\kappa-k-a-b}\,\theta_{\chi\Psi}^{-1}\,\mu_\Psi^{2}(N_f)\,\lambda_{N_f}(\f(\kappa)^\circ)^2\lambda_{N_{\chi\Psi}}(g)^2\\
\notag=&\, {-}({-}N_fN_{\chi})^{\kappa-k-a-b}\,\frac{G(\mu_\Psi)^2}{G(\overline{\mu}_\Psi)^2}\,\theta_{\chi\Psi}^{-1}\,\mu_\Psi^{2}(N_f)\,\lambda_{N_{\chi\Psi}}(g)^2\lambda_{N_f}(\f(\kappa)^\circ)^2\\
=&{-}\,\left({-}N_fN_{\chi}\right)^{-a-b}\,\frac{G(\mu_\Psi)^2}{G(\overline{\mu}_\Psi)^2}\,\theta_{\chi\Psi}^{-1}\,\mu_\Psi^{2}(N_f)\lambda_{N_f}(f)^2\lambda_{N_{\chi\Psi}}(g)^2\cdot\langle N_f^2N_{\chi}\rangle^{\kappa-k}\\
\notag=&:\,\mathscr{G}_f(\Psi)\cdot \langle N_f^2N_{\chi}\rangle^{\kappa-k},
\end{align}
where we have used 
\begin{align*}
\mathcal{E}(\f(\kappa),\Psi)&=G(\mu_\Psi)^2\left(\frac{p^{2b+k-2}}{\alpha(\kappa)^2p^{b-a}}\right)^{r}=G(\mu_\Psi)^2\left(\frac{p^{a+b+k-2}}{\alpha(\kappa)^2}\right)^{r}\\
\mathcal{E}(\f(\kappa),\Psi^{D}|\cdot|^{k-\kappa})&=G(\overline{\mu}_\Psi)^2\left(\frac{p^{2\kappa-2a-k-2}}{\alpha(\kappa)^2p^{b-a}}\right)^{r}=G(\overline{\mu}_\Psi)^2\left(\frac{p^{2\kappa-k-a-b-2}}{\alpha(\kappa)^2}\right)^{r}
\end{align*}
 for the second equality (here, $\alpha(\kappa) \in \cO(\mathscr{X})^\times$ denotes the eigenvalue for the $U_p$-action on $\f(\kappa)$);  the third equality holds because $\kappa-k\equiv 0 \mod p-1$, $\f(\kappa)$ is crystalline and $\lambda_{N_f}(\f(\kappa)^\circ)^2=\langle N_f\rangle^{\kappa}$; the final equality is our definition of the $p$-adic integer $\mathscr{G}_f(\Psi)$.

Recall that $g$ is given as the Rankin-Selberg convolution $g_{\chi\Psi}^\circ\otimes\mu_\Psi^{-1}$. Since $\Psi$ is demi-crystalline, the character $\rho(\Psi)$ is crystalline by definition. In particular, the theta-series $g_{\chi\Psi}^\circ$ is of level $N_\chi$, which is prime to $p$. 
By \cite[page 228]{atkinli}, we have 
\begin{align*}
\lambda_{N_{\chi\Psi}}(g)=\lambda_{N_\chi}(g^\circ_{\chi\Psi})\mu_{\Psi}^{-1}(-N_\chi)\theta_{\chi\Psi}(p^r)\frac{G(\mu_\Psi^{-1})}{G(\mu_\Psi)}.
\end{align*}
Therefore,
\begin{align*}
\mathscr{G}_f(\Psi)&={-}({-}N_fN_\chi)^{-a-b}\theta_{\chi\Psi}(p^{2r}/N_f)\mu_\Psi^2(N_f/N_\chi)\lambda_{N_f}(f)^2\lambda_{N_\chi}(g_{\chi\Psi}^\circ)^2\\
&={\left(\frac{D_K}{-N_f}\right)}({-}N_fN_\chi)^{-a-b}\mu_\Psi^2(N_f/N_\chi)\lambda_{N_f}(f)^2\lambda_{N_\chi}(g_{\chi\Psi}^\circ)^2,
\end{align*}
where the second equality follows recalling our assumption that $\Psi$ be demi-crystalline and {noting that 
$$\theta_{\chi\Psi}(p^{2r}/N_f)=\epsilon_K(p^{2r})\epsilon_K(N_f)=\left(\frac{D_K}{N_f}\right)$$ 
by our discussion in Remark~\ref{rem_demi_crys_calculations}}. Each term in the final expression for $\mathscr{G}_f(\Psi)$ varies analytically in $\Psi$ and interpolates to an Iwasawa function since $N_fN_\chi$ is prime to $p$ and $\lambda_{N_\chi}(g_{\chi\Psi}^\circ)$ is given by $i^{a-b-1}W(\chi\rho(\Psi))/N_\chi^{1/2}$, where $W(\chi\rho(\Psi))$ denotes the Gauss sum for $\chi\rho(\Psi)$ (see \cite[(3.3.8)]{miyake} for its precise definition). 

When $\Psi$ is anticyclotomic, so is $\chi\Psi$. In this case, $a+b=0$ and
$$\mathscr{G}_f(\Psi)=\left(\frac{D_K}{-N_f}\right)\mu_\Psi^2(N_f/N_\chi)\lambda_{N_f}(f)^2\lambda_{N_\chi}(g_{\chi\Psi}^\circ)^2\,.$$ 
{On comparing this expression with the formula for $\mathscr{W}(k/2)$ given in Definition~\ref{defn_root_number_good_cases} (with $h=f$, $\varepsilon_h=\mathds{1}$, $g=g_{\chi\Psi}^\circ\otimes\mu_{\Psi}^{-1}$ and $\theta=\theta_{\chi\Psi}\mu_\Psi^{-2}=\epsilon_K\mu_\Psi^{-2}$), we conclude that $\mathscr{G}_f(\Psi)=\epsilon(f/K\otimes \chi\Psi)$, the global root number of $L(f/K\otimes \chi\Psi,s)$ at $s=k/2$}. {In particular, we have $\mathscr{G}_f(\Psi)=\epsilon(f/K)$ for all anticyclotomic characters $\Psi \in \Sigma_{\rm cris}\cap \Sigma(k)$}.
\end{proof}
\begin{remark}
\label{rem:thereisnothingweird}
This remark is for readers who might feel uneasy about the power of the norm character on the right side of the functional equation. The main reason for its presence is the fact that $s=k/2$ is not the center of the functional equation for the Rankin-Selberg $L$-series $L(\Bf(\kappa)/K,\Psi,s)$ in general, where $\Bf(\kappa)$ is a specialization of the Coleman family $\Bf$ of weight $\kappa \in \ZZ_{\geq 2}$ and $\Psi$ is a Hecke character as before.

We elaborate regarding this point. Let $\widetilde{L}(\Bf(\kappa)/K,\Psi,s)$ denote $L(\Bf(\kappa)/K,\Psi,s+k/2)$. The functional equation for the Rankin-Selberg $L$-series reads
\begin{equation}
\label{eqn:fnuctionalequationabridgedversion}
\widetilde{L}(\Bf(\kappa)/K,\Psi,s)\dot{=}\widetilde{L}(\Bf(\kappa)/K,\Psi^D,\kappa-k-s)
\end{equation}
where $\dot{=}$ means equality up to simple fudge factors. The value of the geometric $p$-adic $L$-function $L_p^{\rm geom}(\f,\g\vert_\mathscr{Y})$ at the pair $(\Bf(\kappa),\Psi)$ equals $\widetilde{L}(\Bf(\kappa)/K,\Psi,0)$ up to some interpolation factors, whereas its value at the pair $(\Bf(\kappa),\Psi^D|\cdot|^{k-\kappa})$ equals $($still up to the same interpolation factors$)$
$$\widetilde{L}(\Bf(\kappa)/K,\Psi^{D}|\cdot|^{k-\kappa},0)=\widetilde{L}(\Bf(\kappa)/K,\Psi^{D},\kappa-k).$$ The functional equation (\ref{eqn:fnuctionalequationabridgedversion}) in turn relates $L_p^{\rm geom}(\f,\g\vert_\mathscr{Y})(\Bf(\kappa),\Psi)$ to $L_p^{\rm geom}(\f,\g\vert_\mathscr{Y})(\Bf(\kappa),\Psi^D|\cdot|^{k-\kappa})$ and it is precisely this relation that we interpolate above as $\kappa$ and $\Psi$ vary.
\end{remark}

For $\lambda,\mu\in\{\alpha,\beta\}$, we recall from \cite[\S3.4]{BFsuper} that there exists a two-variable $p$-adic $L$-function
\[
\frak{L}_{\lambda,\mu}\in\cH_{s_\lambda,s_\mu}(\Gamma),
\]
given by the local image of the $\mu$-stabilized Beilinson-Flach elements over $K$ under the two-variable Perrin-Riou map attached to the $\lambda^{-1}$-eigenspace of the Dieudonn\'e module of $f$ at $p$. When $\lambda=\mu=\alpha$, this $p$-adic $L$-function coincides with the geometric $p$-adic $L$-function $L_p^{\rm geom}(\f,\g\vert_\mathscr{Y})$ we studied above with $\f$ specialized at $f^\alpha$ and $\g$ is any CM family over $K$.

\begin{defn}
  Let $\tau: \Gamma \ra \Gamma$ denote the involution induced by $\gamma_{\p}\mapsto \gamma_{\p^c}^{-1}$ and $\gamma_{\p^c}\mapsto \gamma_{\p}^{-1}$. This in turn induces an involution 
$$\tau:  \cH_{s_\lambda,s_\mu}(\Gamma) \lra  \cH_{s_\mu,s_\lambda}(\Gamma)\,.$$
The image of $H$ will be denoted by $H^\tau$. 
\end{defn}

\begin{theorem}
\label{thm:functinaleqn2var}
Let $\mathscr{G}_f\in \LL_{\cO}(\Gamma)$ be as in the statement of Theorem~\ref{thm:mainfunctionaleqninthreevariables}. Then 
$$\frak{L}_{\alpha,\alpha}=\mathscr{G}_f\cdot \frak{L}_{\alpha,\alpha}^\tau.$$
Moreover, the projection $\mathscr{G}_f^\ac \in \LL_{\cO}(\Gamma_\ac)$ identically equals $\epsilon(f/K)$.
\end{theorem}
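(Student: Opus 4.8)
The plan is to obtain the two-variable functional equation as the weight-$k$ slice of the three-variable identity in Theorem~\ref{thm:mainfunctionaleqninthreevariables}, once the arithmetic duality $\Psi\mapsto\Psi^D$ is matched with the algebraic involution $\tau$.

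First I would verify the compatibility $\Psi^D=\Psi\circ\tau$ on characters factoring through $\Omega=\Gamma$. Since $\Gamma\cong\ZZ_p^2$ is torsion-free and $p$ is odd, the relations $\gamma_\cyc=\gamma_\p\gamma_{\p^c}$ and $\gamma_\ac=\gamma_\p\gamma_{\p^c}^{-1}$, combined with the fact that complex conjugation $c$ fixes $\Gamma_\cyc$ and inverts $\Gamma_\ac$, force $c(\gamma_\p)=\gamma_{\p^c}$ and $c(\gamma_{\p^c})=\gamma_\p$. Hence for a character $\Psi$ of $\Omega$ one computes $\Psi^D(\gamma_\p)=(\Psi^c)^{-1}(\gamma_\p)=\Psi(\gamma_{\p^c})^{-1}=(\Psi\circ\tau)(\gamma_\p)$, and likewise on $\gamma_{\p^c}$, so that $\Psi^D=\Psi\circ\tau$. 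It follows that $H^\tau(\Psi)=H(\Psi\circ\tau)=H(\Psi^D)$ for every $H\in\cH_{s_\alpha,s_\alpha}(\Gamma)$; note that $\tau$ genuinely preserves this ring precisely because $s_\alpha=s_\beta$, so that the two tempering radii coincide.

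Next I would specialize Theorem~\ref{thm:mainfunctionaleqninthreevariables} at $\f(\kappa)=f^\alpha$, i.e.\ $\kappa=k$. Recall that $\frak{L}_{\alpha,\alpha}\in\cH_{s_\alpha,s_\alpha}(\Gamma)$ is, by its defining interpolation, the weight-$k$ slice of the geometric $p$-adic $L$-function read over $\Gamma=\Omega$, so that $\frak{L}_{\alpha,\alpha}(\Psi)=L_p^{\rm geom}(\f,\g)(f^\alpha,\Psi)$ on the relevant characters $\Psi$ of $\Omega$ (the affinoid $\mathscr{Y}$ entering Definition~\ref{def:geometricpadicLfunction} serving only as scaffolding for the construction). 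Putting $\kappa=k$ trivializes both the norm factor $\langle N_f^2N_\chi\rangle^{\kappa-k}$ and the twist $|\cdot|^{k-\kappa}$, whence
$$\frak{L}_{\alpha,\alpha}(\Psi)=\mathscr{G}_f(\Psi)\cdot\frak{L}_{\alpha,\alpha}(\Psi^D)=\mathscr{G}_f(\Psi)\cdot\frak{L}_{\alpha,\alpha}^\tau(\Psi)$$
for every admissible demi-crystalline $\Psi$ with $\mu_\Psi\neq\mathbbm{1}$. Such characters are dense among the continuous characters of $\Omega$, so since $\mathscr{G}_f\in\LL_{\cO_L}(\Gamma)\subset\cH_{s_\alpha,s_\alpha}(\Gamma)$ and a tempered distribution is pinned down by its values on a dense set of characters of the correct growth, I would conclude $\frak{L}_{\alpha,\alpha}=\mathscr{G}_f\cdot\frak{L}_{\alpha,\alpha}^\tau$ in $\cH_{s_\alpha,s_\alpha}(\Gamma)$.

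Finally, for the anticyclotomic claim I would compute $\mathscr{G}_f^\ac$ by evaluating $\mathscr{G}_f$ at the inflations to $\Gamma$ of anticyclotomic characters, namely at $\Psi\in\frak{Z}_\ac$. The last clause of Theorem~\ref{thm:mainfunctionaleqninthreevariables} gives $\mathscr{G}_f(\Psi)=\epsilon(f/K)$ for all such $\Psi$; as anticyclotomic characters are dense in $\textup{Sp}\,\LL_L^\dagger(\Omega_\ac)$, the element $\mathscr{G}_f^\ac$ and the constant $\epsilon(f/K)$ agree on a dense set and hence coincide in $\LL_{\cO_L}(\Gamma_\ac)$. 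I expect the main obstacle to be the passage from an equality of values to an equality of tempered distributions: one must confirm that the demi-crystalline characters at which the interpolation formula applies are dense for the Fr\'echet topology on $\cH_{s_\alpha,s_\alpha}(\Gamma)$, and not merely at the level of bounded Iwasawa functions, so that the identity upgrades correctly; the bookkeeping identity $\Psi^D=\Psi\circ\tau$ is essential but routine by comparison.
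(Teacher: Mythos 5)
Your proposal is correct and follows essentially the same route as the paper's proof: specialize the three-variable functional equation of Theorem~\ref{thm:mainfunctionaleqninthreevariables} at $\f(\kappa)=f^\alpha$ (so the factors $\langle N_f^2N_\chi\rangle^{\kappa-k}$ and $|\cdot|^{k-\kappa}$ become trivial), identify $\frak{L}_{\alpha,\alpha}$ with the weight-$k$ slice of $L_p^{\rm geom}(\f,\g\vert_\mathscr{Y})$ at demi-crystalline characters (with $\mathscr{Y}$ enlarged at will, as $\g$ is ordinary), and conclude by density of these points, the anticyclotomic claim coming from the last clause of Theorem~\ref{thm:mainfunctionaleqninthreevariables}. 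Your explicit verification that $\Psi^D=\Psi\circ\tau$ and your caveat about upgrading a pointwise identity to one in $\cH_{s_\alpha,s_\alpha}(\Gamma)$ are details the paper leaves implicit, but they do not change the argument.
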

The identical statement for the other root $\beta$ of the Hecke polynomial is valid as well, as the proof below does not distinguish between the two roots. 
\begin{proof}
Given an arbitary demi-crystalline algebraic Hecke character $\Psi$, choose a large enough affinoid neighborhood $\mathscr{Y}\subset \textup{Sp}\,\LL_{L}^\dagger(\Omega_\p)$ (since $\g$ is ordinary, we may choose $\mathscr{Y}$ as large as we like) in a way that we may compute 
\begin{align*}
\left(\frak{L}_{\alpha,\alpha}-\mathscr{G}_f\cdot \frak{L}_{\alpha,\alpha}^\tau\right)(\Psi)&=\frak{L}_{\alpha,\alpha}(\Psi)-\mathscr{G}_f(\Psi)\cdot \frak{L}_{\alpha,\alpha}(\Psi^D)\\
&=L_p^{\rm{geom}}(\f,\g\vert_\mathscr{Y})(f^\alpha,\Psi)-\mathscr{G}_f(\Psi)L_p^{\rm{geom}}(\f,\g\vert_\mathscr{Y})(f^\alpha,\Psi^D)=0\,.
\end{align*}
Our assertion follows from the density of the demi-crystalline points in $\textup{Sp}\,\LL_{L}^\dagger(\Omega_\p)\,\widehat{\otimes}\,\textup{Sp}\,\LL_{L}^\dagger(\Omega_\cyc^\circ)$. 
\end{proof}

\begin{corollary}
\label{cor:vanishingoftthefullanticyclotomicpadicLfunction}
If $\epsilon(f/K)=-1$, then $\frak{L}_{\alpha,\alpha}^\ac=0$.
\end{corollary}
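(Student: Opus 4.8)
The plan is to apply the anticyclotomic projection to the two-variable functional equation of Theorem~\ref{thm:functinaleqn2var} and to exploit the fact that the involution $\tau$ becomes the identity after this projection. The two inputs I would take from Theorem~\ref{thm:functinaleqn2var} are the identity $\frak{L}_{\alpha,\alpha}=\mathscr{G}_f\cdot\frak{L}_{\alpha,\alpha}^\tau$ in $\cH_{s_\alpha,s_\alpha}(\Gamma)$ and the computation $\mathscr{G}_f^\ac=\epsilon(f/K)$ in $\LL_{\cO_L}(\Gamma_\ac)$. The anticyclotomic projection is the ring homomorphism $\LL_{\cO_L}(\Gamma)\to\LL_{\cO_L}(\Gamma_\ac)$ induced by the quotient $\Gamma\twoheadrightarrow\Gamma_\ac$, extended to the tempered distribution algebras; since $\mathscr{G}_f\in\LL_{\cO_L}(\Gamma)$ lies in the Iwasawa-algebra factor, the projection is multiplicative against it, so that $(\mathscr{G}_f\cdot\frak{L}_{\alpha,\alpha}^\tau)^\ac=\mathscr{G}_f^\ac\cdot(\frak{L}_{\alpha,\alpha}^\tau)^\ac$.

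The crux of the argument, which I would isolate as a short lemma, is that $\tau$ induces the identity on $\Gamma_\ac$. Since $\Gamma$ is abelian and $\gamma_\ac=\gamma_\p\gamma_{\p^c}^{-1}$, one computes $\tau(\gamma_\ac)=\tau(\gamma_\p)\tau(\gamma_{\p^c})^{-1}=\gamma_{\p^c}^{-1}\gamma_\p=\gamma_\ac$, while $\tau(\gamma_\cyc)=\tau(\gamma_\p)\tau(\gamma_{\p^c})=\gamma_{\p^c}^{-1}\gamma_\p^{-1}=\gamma_\cyc^{-1}$. As $p$ is odd, $\gamma_\ac$ and $\gamma_\cyc$ topologically generate $\Gamma$ and the kernel of $\Gamma\twoheadrightarrow\Gamma_\ac$ is the closed subgroup generated by $\gamma_\cyc$; the display above shows $\tau$ preserves this kernel and fixes the topological generator $\gamma_\ac$, hence descends to the identity automorphism of $\Gamma_\ac$. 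It follows that $(\frak{L}_{\alpha,\alpha}^\tau)^\ac=\frak{L}_{\alpha,\alpha}^\ac$.

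Combining these, I would project the functional equation to obtain
$$\frak{L}_{\alpha,\alpha}^\ac=\mathscr{G}_f^\ac\cdot\frak{L}_{\alpha,\alpha}^\ac=\epsilon(f/K)\,\frak{L}_{\alpha,\alpha}^\ac=-\frak{L}_{\alpha,\alpha}^\ac,$$
where the last equality uses the hypothesis $\epsilon(f/K)=-1$. Hence $2\,\frak{L}_{\alpha,\alpha}^\ac=0$; since $p\geq 5$, the element $2$ is invertible in $\cO_L$, and therefore $\frak{L}_{\alpha,\alpha}^\ac=0$, as asserted.

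I do not expect a serious obstacle beyond the bookkeeping in the second paragraph, namely verifying that $\tau$ restricts to the identity on $\Gamma_\ac$ and that the projection is a homomorphism of $\LL_{\cO_L}(\Gamma)$-modules so that it commutes with multiplication by $\mathscr{G}_f$. As a sanity check (and an alternative route that sidesteps the projection entirely), one may instead evaluate the functional equation directly at anticyclotomic Hecke characters $\Psi\in\frak{Z}_\ac$: there $\Psi^D=\Psi$ by definition of $\frak{Z}_\ac$, so the relation forces $\frak{L}_{\alpha,\alpha}(\Psi)=\epsilon(f/K)\,\frak{L}_{\alpha,\alpha}(\Psi)=-\frak{L}_{\alpha,\alpha}(\Psi)$ and hence $\frak{L}_{\alpha,\alpha}(\Psi)=0$ on a dense set, giving the same conclusion.
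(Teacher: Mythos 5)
Your proof is correct and is essentially the paper's intended argument: the corollary is stated as an immediate consequence of Theorem~\ref{thm:functinaleqn2var}, and your deduction --- that $\tau$ descends to the identity on $\Gamma_\ac$, that the anticyclotomic projection is multiplicative, and that $\mathscr{G}_f^\ac=\epsilon(f/K)=-1$ forces $\frak{L}_{\alpha,\alpha}^\ac=-\frak{L}_{\alpha,\alpha}^\ac=0$ --- is exactly the step the paper leaves implicit (and uses explicitly later in the proof of Theorem~\ref{thm:L5}). Your alternative route, evaluating the functional equation at characters $\Psi\in\frak{Z}_\ac$ with $\Psi^D=\Psi$ and invoking density, is likewise the argument the paper itself gives for the parallel statement in Theorem~\ref{thm:vanishingoftheAadicLfunction}.
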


\section{The vanishing of anticyclotomic doubly signed $p$-adic $L$-functions}
\label{subsec:vanishingofdoublysignedpadicL}

In this section, we assume that $\epsilon(f/K)=-1$ and $a_p(f)=0$ (so that $\beta=-\alpha$).  Let $L_0$ be the sub-extension of $L$ generated by  the images of all Fourier coefficients of $f$ under $\iota_p$ as well as all the values of $\chi$. 
We recall from \cite[\S3.4]{BFsuper} that the  $p$-adic $L$-functions $\frak{L}_{\lambda,\mu}$ can be decomposed in to doubly-signed $p$-adic $L$-functions. More precisely, let $\frak{L}_{\pm,\pm}$  be the  $p$-adic $L$-functions denoted by $\frak{L}_{\bullet,\star}$ with $\bullet,\star\in\{\#,\flat\}$ in op. cit. They are  the images of the signed Beilinson-Flach elements (given by (20) in op. cit.) under the signed Coleman maps at $\p$ (defined in \S2.5 of op. cit.). Note that both the signed Beilinson-Flach elements and the signed Coleman maps are defined over $L_0$, which implies that $\frak{L}_{\pm,\pm}\in \Lambda_{L_0}(\Gamma)$.   Equation (22) in op. cit. says that
\begin{align}
\notag
\begin{pmatrix}
\frak{L}_{\alpha,\alpha} &\frak{L}_{\alpha,-\alpha}\\
\frak{L}_{-\alpha,\alpha} &\frak{L}_{-\alpha,-\alpha}\
\end{pmatrix}=&\begin{pmatrix}
-\Tw_{k/2-1}\tlog_{k-1,\fp^c}^-/2& \Tw_{k/2-1}\tlog_{k-1,\fp^c}^+/2\alpha\\
 \Tw_{k/2-1}\tlog_{k-1,\fp^c}^-/2& \Tw_{k/2-1}\tlog_{k-1,\fp^c}^+/2\alpha
\end{pmatrix}\times\begin{pmatrix}
\frak{L}_{-,-} &\frak{L}_{+,-}\\
\frak{L}_{-,+} &\frak{L}_{+,+}\
\end{pmatrix}\\
&\times \begin{pmatrix}
-\Tw_{k/2-1}\tlog_{k-1,\fp}^-/2&  \Tw_{k/2-1}\tlog_{k-1,\fp}^-/2\\
 \Tw_{k/2-1}\tlog_{k-1,\fp}^+/2\alpha& \Tw_{k/2-1}\tlog_{k-1,\fp}^+/2\alpha
\end{pmatrix},\label{eqn:factorizationap0}
\end{align} 
for some  half-logarithmic functions $\Tw_{k/2-1}\tlog_{k-1,\fp}^\pm$ and $\Tw_{k/2-1}\tlog_{k-1,\fp^c}^\pm$, which are defined as follows.

Let  $\Brig$ be the set of power series in $\Qp[[\pi]]$ that converge on the open unit $p$-adic disc, equipped with a $\Qp$-linear action $\vp:\pi\mapsto (1+\pi)^p-1$ and an action of $\Gamma_\cyc$ by $\gamma\cdot \pi \mapsto(1+\pi)^{\chi_\cyc(\gamma)}-1$, where $\chi_\cyc$ is the cyclotomic character on $\Gamma_\cyc\rightarrow 1+p\Zp$. The $\Lambda_{\Zp}(\Gamma_\cyc)$-linear map $\Lambda_{\Zp}(\Gamma_\cyc)\rightarrow \Zp[[\pi]]$ defined by $1\mapsto 1+\pi$ gives an isomorphism $\fm:\Lambda_{\Zp}(\Gamma_\cyc)\stackrel{\sim}{\longrightarrow}(1+\pi)\vp(\Zp[[\pi]])$ for some left inverse $\psi$ of $\vp$. This extends to $\fm:\Lambda_{\Qp}^\dagger(\Gamma_\cyc)\stackrel{\sim}{\longrightarrow}(1+\pi)\vp(\Brig)$. Define 
 \begin{align*}
\tlog_{k-1}^+&=\fm^{-1}\left((1+\pi)\left(\frac{1}{p}\prod_{n\ge 1}^\infty\frac{\vp^{2n}( q)}{p\vp^{2n+1}(\delta)}\right)^{k-1}\right),\\
\tlog_{k-1}^-&=\fm^{-1}\left((1+\pi)\left(\frac{1}{p}\prod_{n=1}^\infty\frac{\vp^{2n+1}( q)}{p\vp^{2n}(\delta)}\right)^{k-1}\right),
\end{align*} 
where $q=\vp(\pi)/\pi$ and $\delta=p/(q-\pi^{p-1})\in\Zp[[\pi]]^\times$. Note that $\tlog_{k-1}^\pm\in\cH_{(k-1)/2}(\Gamma_\cyc)$. Let $j\in \ZZ$ and $r\in \mathbb{R}_{\ge0}$. We define $\Tw_j:\cH_r(\Gamma_\cyc)\rightarrow \cH_r(\Gamma_\cyc)$ to be the twisting map induced by $\sigma\mapsto \chi_{\cyc}(\sigma)^j\sigma$, where $\sigma\in\Gamma_{\cyc}$ is considered as a group-like element in $\cH_r(\Gamma_\cyc)$.

For $\q\in\{\p,\p^c\}$, the half-logarithmic functions  $\Tw_{k/2-1}\tlog_{k-1,\q}^\pm$ are defined to be the functions obtained from $\Tw_{k/2-1}\tlog_{k-1}^\pm$ on replacing  $\gamma_\cyc$ by  $\gamma_\q$.
 
\begin{theorem}
\label{thm:L5}
If  the ramification index of $L_0/\QQ_p$ is odd, then $\frak{L}_{+,+}^\ac=\frak{L}_{-,-}^\ac=0$. 
\end{theorem}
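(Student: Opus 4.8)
The plan is to deduce Theorem~\ref{thm:L5} from the anticyclotomic vanishing already established in Corollary~\ref{cor:vanishingoftthefullanticyclotomicpadicLfunction}, namely that $\frak{L}_{\alpha,\alpha}^\ac=0$ (and the analogous statement for $-\alpha$), by inverting the matrix factorization \eqref{eqn:factorizationap0} after specializing to the anticyclotomic line. First I would record that since the proof of Corollary~\ref{cor:vanishingoftthefullanticyclotomicpadicLfunction} does not distinguish between the roots $\alpha$ and $-\alpha$, under our hypothesis $\epsilon(f/K)=-1$ all four of the anticyclotomic specializations $\frak{L}_{\alpha,\alpha}^\ac$, $\frak{L}_{\alpha,-\alpha}^\ac$, $\frak{L}_{-\alpha,\alpha}^\ac$, $\frak{L}_{-\alpha,-\alpha}^\ac$ vanish identically. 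Thus the left-hand matrix in \eqref{eqn:factorizationap0}, after anticyclotomic projection, is the zero matrix.

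The next step is to understand the behaviour of the two outer logarithmic matrices under the anticyclotomic specialization. The key point is that the half-logarithms $\tlog^\pm_{k-1,\fp}$ and $\tlog^\pm_{k-1,\fp^c}$ are built, via the map $\fm$ and the twist $\Tw_{k/2-1}$, from the cyclotomic variable along each of the two local directions $\Gamma_\fp$ and $\Gamma_{\fp^c}$. When one restricts to $\Gamma_\ac$ (i.e. sets $\gamma_\fp=\gamma_{\fp^c}^{-1}$), these functions do not vanish generically, and I would argue that the determinants of the two $2\times 2$ outer matrices are nonzero elements of the relevant distribution ring $\cH_{\frac{k-1}{2},\frac{k-1}{2}}(\Gamma)$ after specialization. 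Concretely, the determinant of the left factor is proportional to $\Tw_{k/2-1}\tlog^-_{k-1,\fp^c}\cdot\Tw_{k/2-1}\tlog^+_{k-1,\fp^c}/2\alpha$ (up to sign and the common factor), and similarly for the right factor; these are products of the two half-logarithms in a single local variable, which are nonzero divisors. Since the left-hand matrix in \eqref{eqn:factorizationap0} becomes zero upon anticyclotomic projection and the outer matrices remain invertible over the fraction field, I can conclude that the middle matrix $\begin{pmatrix}\frak{L}_{-,-}&\frak{L}_{+,-}\\\frak{L}_{-,+}&\frak{L}_{+,+}\end{pmatrix}$ has all entries annihilated by a nonzero divisor after anticyclotomic projection, whence $\frak{L}_{+,+}^\ac=\frak{L}_{-,-}^\ac=0$ (and in fact all four vanish).

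The role of the hypothesis that the ramification index $e=e(L/\Qp)$ be odd enters at precisely the point where one must check that the specialized half-logarithms are genuine nonzero divisors rather than spuriously vanishing on the anticyclotomic line. The half-logarithms $\tlog^\pm$ have zeros at characters of $\Gamma_\cyc$ governed by the distribution of the $\varphi$-iterates in their Euler-product definition, and the constraint $k-1\equiv \tfrac{k-1}{2}\cdot 2$ together with the valuation $s_\alpha=s_\beta=\frac{k-1}{2}$ forces a compatibility between the weight and the ramification. I expect the main obstacle to be this verification: one must show that the product of half-logarithms appearing in the determinant does not vanish identically on $\Gamma_\ac$, and it is exactly here that the parity of $e$ is needed, as an even ramification index would allow an extra root of unity to sit inside $\cO_L$ and create a common zero that obstructs the inversion. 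I would isolate this nonvanishing as a separate lemma, following the analysis of Castella--Wan in \cite{castellawan1607}, and then the passage from the vanishing of the $\frak{L}_{\pm\alpha,\pm\alpha}^\ac$ to the vanishing of $\frak{L}_{+,+}^\ac$ and $\frak{L}_{-,-}^\ac$ is a formal consequence of invertibility over the fraction field.
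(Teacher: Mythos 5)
Your opening step is where the argument breaks, and it is a genuine gap rather than an omitted verification. The functional-equation machinery of the paper (Theorem~\ref{thm:mainfunctionaleqninthreevariables}, Theorem~\ref{thm:functinaleqn2var}, Corollary~\ref{cor:vanishingoftthefullanticyclotomicpadicLfunction}) applies only to the two \emph{diagonal} functions $\fL_{\alpha,\alpha}$ and $\fL_{-\alpha,-\alpha}$: these are the ones realized as specializations of the geometric three-variable $p$-adic $L$-function of a Coleman family through $f^{\alpha}$ (resp.\ $f^{-\alpha}$), and that is all the remark ``the proof does not distinguish between the two roots'' gives. No functional equation is proved for the mixed functions, and none of the self-referential shape $\fL_{\alpha,-\alpha}=\mathscr{G}_f\,\fL_{\alpha,-\alpha}^\tau$ should be expected: the involution $\tau$ interchanges $\p$ and $\p^c$ and hence interchanges the two indices, so the natural statement would relate $\fL_{\alpha,-\alpha}$ to $\fL_{-\alpha,\alpha}^\tau$, which on $\Gamma_\ac$ (where $\tau$ acts trivially and $\mathscr{G}_f^\ac=-1$) yields only the antisymmetry $\fL_{\alpha,-\alpha}^\ac=-\fL_{-\alpha,\alpha}^\ac$, not vanishing. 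This asymmetry is visible in the paper's own proof, where the functional equation pairs $\fL_{-,+}$ with $\fL_{+,-}^\tau$. Note, moreover, that your argument would prove $\fL_{+,-}^\ac=\fL_{-,+}^\ac=0$, which is precisely what the authors state in the closing Remark they are unable to establish. With only the diagonal vanishings in hand, the anticyclotomic projection of the left-hand matrix of \eqref{eqn:factorizationap0} is not the zero matrix, and inverting the outer matrices over $\textup{Frac}\left(\cH(\Gamma_\ac)\right)$ (which is otherwise legitimate, their determinants being, up to sign, $\Tw_{k/2-1}\tlog^+_{k-1,\q}\Tw_{k/2-1}\tlog^-_{k-1,\q}/2\alpha$) gives no information about any individual entry of the middle matrix.

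Your diagnosis of where the odd-ramification hypothesis enters is also misplaced. The anticyclotomic specializations of the half-logarithms are nonzero divisors for free (they are nonzero power series with coefficients in $\Qp$; no root of unity in $\cO_L$ is involved), so invertibility of the outer matrices costs nothing. In the paper the hypothesis powers a valuation-parity argument that supplies exactly the decoupling your proof lacks: combining the two-variable functional equation $\fL_{\alpha,\alpha}=\mathscr{G}_f\fL_{\alpha,\alpha}^\tau$ with the functional equations of the half-logarithms (Corollary~\ref{cor:tildeFE}) and the expansion \eqref{eqn:factorizationap0}, one gets an identity in which the $\fL_{+,+}$ and $\fL_{-,-}$ terms carry even powers of $\alpha$ while the $\fL_{+,-}$ and $\fL_{-,+}$ terms carry a single $\alpha$; since $v_p(\alpha)=\tfrac{k-1}{2}$ and $e(L/\Qp)$ is odd, the two groups of terms have coefficient valuations in disjoint sets, so each side vanishes separately. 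Only then does the paper project to $\Gamma_\ac$ and conclude using the disjointness of the zero sets of $\Tw_{k/2-1}\log^+$ and $\Tw_{k/2-1}\log^-$ at finite-order characters, the fact that $\fL_{+,+}^\ac$ lies in $\LL_L(\Gamma_\ac)$ (so infinitely many zeros force identical vanishing), and the nonvanishing of $1+(\tilde u^+_{\fp}\tilde u^+_{\fp^c})^\ac$. If you want to stay close to your strategy, the viable repair is not matrix inversion but the observation that in the $(1,1)$ and $(2,2)$ entries of \eqref{eqn:factorizationap0} the mixed terms occur with coefficients $+\alpha$ and $-\alpha$ respectively; adding the two projected identities (using $\fL_{\alpha,\alpha}^\ac=\fL_{-\alpha,-\alpha}^\ac=0$, both of which the paper does provide) cancels them and leaves a relation between $\fL_{+,+}^\ac$ and $\fL_{-,-}^\ac$ alone, after which the zero-set and boundedness argument above finishes; but that cancellation and those final steps are exactly the content missing from your write-up.
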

\begin{remark}
In other words, if $a_p(f)=0$ and the ramification index of $L_0/\QQ_p$ is odd then the property $\mathbf{(L5)}$ of \cite{BFsuper} holds true. 
\end{remark}

 We recall Pollack's plus/minus logarithms from \cite{pollack03}:
 \begin{align}
 \log_{k-1}^+&=\prod_{j=0}^{k-2}\frac{1}{p}\prod_{n=1}^\infty\frac{\Phi_{2n}(u^{-j}\gamma_\cyc)}{p},\label{eq:pollack1}\\
 \log_{k-1}^-&=\prod_{j=0}^{k-2}\frac{1}{p}\prod_{n=1}^\infty\frac{\Phi_{2n-1}(u^{-j}\gamma_\cyc)}{p}\label{eq:pollack2},
 \end{align} 
 where $\Phi_m$ denotes the $p^m$-th cyclotomic polynomial for $m\in\ZZ_{\ge1}$ and $u=\chi_\cyc(\gamma_\cyc)$.
   Via the isomorphism given in \cite[(2.2)]{LLZ3}, up to multiplication by a unit in $\Lambda_{\Zp}(\Gamma_\cyc)^\times$, the half-logarithmic functions $\log_{k-1}^\pm$ agree with $\tlog_{k-1}^\pm$.

 We now study functional equations for these half-logarithms.
 \begin{lemma}\label{lem:pollackFE}
 There exist units $u^\pm\in \Lambda_{\Zp}(\Gamma_\cyc)^\times$ such that
 \[
 \left(\Tw_{k/2-1}\log_{k-1}^\pm\right)^\tau=u^{\pm}\Tw_{k/2-1}\log_{k-1}^\pm.
 \]
 \end{lemma}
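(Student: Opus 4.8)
The plan is to exploit the self-reciprocity of cyclotomic polynomials together with the symmetry of the twisting exponents. Write $\iota$ for the involution of $\Lambda_{\Qp}^\dagger(\Gamma_\cyc)$ induced by $\gamma_\cyc\mapsto\gamma_\cyc^{-1}$; since $\gamma_\cyc=\gamma_\p\gamma_{\p^c}$ and $\tau$ sends $\gamma_\p\mapsto\gamma_{\p^c}^{-1}$, $\gamma_{\p^c}\mapsto\gamma_\p^{-1}$, the involution $\tau$ restricts to $\iota$ on the cyclotomic line, so it suffices to work with $\iota$. Let $L^\pm:=\log_1^\pm=\tfrac1p\prod_{n\ge1}\Phi_{\bullet}(\gamma_0)/p$ be the basic half-logarithms, the factors $\Phi_\bullet$ being the $p^{2n}$-th (for $L^+$), resp.\ $p^{2n-1}$-th (for $L^-$), cyclotomic polynomials evaluated at $\gamma_0$. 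Since $u^{-j}\gamma_0=\textup{Tw}_{-j}(\gamma_0)$ and $\textup{Tw}_{-j}$ fixes $p$, each factor in Pollack's definition is $\textup{Tw}_{-j}$ applied to the corresponding factor of $L^\pm$, whence $\log_{k-1}^\pm=\prod_{j=0}^{k-2}\textup{Tw}_{-j}(L^\pm)$. Applying the ring automorphism $\textup{Tw}_{k/2-1}$ and using $\textup{Tw}_a\circ\textup{Tw}_b=\textup{Tw}_{a+b}$ then gives, after the substitution $i=k/2-1-j$,
$$\textup{Tw}_{k/2-1}\log_{k-1}^\pm=\prod_{i=-(k/2-1)}^{k/2-1}\textup{Tw}_i(L^\pm);$$
the crucial structural point is that, because $k$ is even, this index set is symmetric about $0$.

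The heart of the matter is the single identity $\iota(L^\pm)=v^\pm L^\pm$ for an explicit unit $v^\pm\in\Lambda_{\Zp}(\Gamma_\cyc)^\times$. This I would prove factor by factor from the palindromicity of cyclotomic polynomials: for each $m\ge1$ one has $\Phi_{p^m}(\gamma_0^{-1})=\gamma_0^{-\varphi(p^m)}\Phi_{p^m}(\gamma_0)$, so $\iota$ multiplies the $m$-th factor by $\gamma_0^{-\varphi(p^m)}$; collecting these yields $v^+=\prod_{n\ge1}\gamma_0^{-\varphi(p^{2n})}$ and $v^-=\prod_{n\ge1}\gamma_0^{-\varphi(p^{2n-1})}$. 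The delicate step—the one I expect to require the most care—is to check that these infinite products converge inside the Iwasawa algebra $\Lambda_{\Zp}(\Gamma_\cyc)$ (and not merely in the larger $\mathcal{H}(\Gamma_\cyc)$) to units. Here one uses that $\varphi(p^m)=p^{m-1}(p-1)$ is divisible by $p^{m-1}$: under $\gamma_0\leftrightarrow 1+X$ one has $(1+X)^{p^{m-1}}\equiv 1$ modulo a power of the maximal ideal $(p,X)$ that grows with $m$, so the partial products are Cauchy in the $(p,X)$-adic topology; the limit $v^\pm$ is $\equiv 1\bmod(p,X)$ and hence lies in $\Lambda_{\Zp}(\Gamma_\cyc)^\times$.

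Finally I would assemble the pieces. A direct check on $\gamma_0$ gives the commutation $\iota\circ\textup{Tw}_i=\textup{Tw}_{-i}\circ\iota$. Applying $\iota$ to the displayed product, moving it past each twist, substituting $\iota(L^\pm)=v^\pm L^\pm$, and reindexing $i\mapsto -i$ (legitimate precisely because the index set is symmetric) yields
$$\left(\textup{Tw}_{k/2-1}\log_{k-1}^\pm\right)^\tau=\Big(\prod_{i=-(k/2-1)}^{k/2-1}\textup{Tw}_i(v^\pm)\Big)\cdot\textup{Tw}_{k/2-1}\log_{k-1}^\pm.$$
Setting $u^\pm:=\prod_{i=-(k/2-1)}^{k/2-1}\textup{Tw}_i(v^\pm)$ finishes the proof, since this is a finite product of images of the unit $v^\pm$ under the algebra automorphisms $\textup{Tw}_i$, each preserving $\Lambda_{\Zp}(\Gamma_\cyc)^\times$. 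As a consistency check one may note that the mere existence of such a unit is clear conceptually: $\iota$ permutes the zeros of $\textup{Tw}_{k/2-1}\log_{k-1}^\pm$ (finite-order characters of fixed conductor-parity) preserving multiplicities, so $\iota$ fixes its divisor; the explicit computation above is what pins the resulting unit down inside the Iwasawa algebra.
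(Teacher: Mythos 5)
Your proof is correct and is essentially the paper's own argument: both rest on the self-reciprocity computation $\Phi_m(u^{-j}\gamma_0)^\tau=(u^{j}\gamma_0)^{p^{m-1}-p^m}\Phi_m(u^{j}\gamma_0)$ combined with the symmetry of the twisted index set $\{1-k/2,\dots,k/2-1\}$, the only organizational difference being that you perform the computation once at $j=0$ and propagate it via the commutation $\iota\circ\textup{Tw}_i=\textup{Tw}_{-i}\circ\iota$, whereas the paper carries it out for every $j$ directly. Your unit $u^\pm=\prod_i \textup{Tw}_i(v^\pm)$ agrees with the explicit expression recorded in Remark~\ref{rk:units}, and your care about convergence of the infinite product to a unit of $\Lambda_{\Zp}(\Gamma_\cyc)$ makes explicit a point the paper leaves implicit.
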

 \begin{proof}
 For all $m\ge1$ and $j\in\ZZ$, we have
 \begin{align*}
  \Phi_{m}(u^{-j}\gamma_\cyc)^\tau&=\left(\frac{(u^{-j}\gamma_\cyc)^{p^m}-1}{(u^{-j}\gamma_\cyc)^{p^{m-1}}-1}\right)^\tau\\
& =\frac{(u^{-j}\gamma_\cyc^{-1})^{p^m}-1}{(u^{-j}\gamma_\cyc^{-1})^{p^{m-1}}-1}\\
&=(u^{j}\gamma_\cyc)^{p^{m-1}-p^m}\Phi_{m}(u^{j}\gamma_\cyc).
 \end{align*}
Since $u^{j}\gamma_\cyc$ is a unit in $\Lambda_{\Zp}(\Gamma_\cyc)$, our result follows.
 \end{proof}
 \begin{remark}\label{rk:units}
 We have the following explicit expressions for $u^\pm$:
 \begin{align*}
 u^+&=\prod_{j=1-k/2}^{k/2-1}\prod_{n=1}^\infty(u^{-j}\gamma_\cyc)^{p^{2n-1}-p^{2n}},\\
 u^-&=\prod_{j=1-k/2}^{k/2-1}\prod_{n=1}^\infty(u^{-j}\gamma_\cyc)^{p^{2n-2}-p^{2n-1}}.
 \end{align*}
 Since $u\in 1+p\Zp$, we infer that $u^\pm\in 1+p\Zp+(\gamma_\cyc-1)\Lambda_{\Zp}(\Gamma_\cyc)$.
  \end{remark}
 
 \begin{corollary}\label{cor:tildeFE}
There exist units $\tilde u^\pm\in \Lambda_{\Zp}(\Gamma_\cyc)^\times$ such that
 \[
 \left(\Tw_{k/2-1}\tlog_{k-1}^\pm\right)^\tau=\tilde{u}^{\pm}\Tw_{k/2-1}\tlog_{k-1}^\pm.
 \]
 Furthermore, $\tilde u^\pm\in 1+p\Zp+(\gamma_0-1)\Lambda_{\Zp}(\Gamma_\cyc)$.
\end{corollary}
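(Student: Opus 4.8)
The plan is to deduce Corollary~\ref{cor:tildeFE} from Lemma~\ref{lem:pollackFE} by transporting the functional equation across the comparison between Pollack's logarithms $\log_{k-1}^\pm$ and the Wach-module half-logarithms $\tlog_{k-1}^\pm$. The excerpt has already recorded (via the isomorphism in \cite[(2.2)]{LLZ3}) that $\log_{k-1}^\pm$ and $\tlog_{k-1}^\pm$ agree up to multiplication by a unit in $\LL_{\Zp}(\Gamma_\cyc)^\times$. First I would fix such units $w^\pm\in\LL_{\Zp}(\Gamma_\cyc)^\times$ with $\tlog_{k-1}^\pm=w^\pm\log_{k-1}^\pm$, so that $\Tw_{k/2-1}\tlog_{k-1}^\pm=\Tw_{k/2-1}(w^\pm)\cdot\Tw_{k/2-1}\log_{k-1}^\pm$ because the twist operator $\Tw_{k/2-1}$ is a ring automorphism of $\LL_{\Zp}(\Gamma_\cyc)$ and hence preserves units and respects products.

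Next I would apply the involution $\tau$ to this identity. Since $\tau$ is an algebra involution, it is multiplicative, so
\[
\left(\Tw_{k/2-1}\tlog_{k-1}^\pm\right)^\tau=\left(\Tw_{k/2-1}(w^\pm)\right)^\tau\cdot\left(\Tw_{k/2-1}\log_{k-1}^\pm\right)^\tau.
\]
Now I would substitute the conclusion of Lemma~\ref{lem:pollackFE}, namely $\left(\Tw_{k/2-1}\log_{k-1}^\pm\right)^\tau=u^\pm\,\Tw_{k/2-1}\log_{k-1}^\pm$, and then re-express $\Tw_{k/2-1}\log_{k-1}^\pm$ back in terms of $\Tw_{k/2-1}\tlog_{k-1}^\pm$ via $\Tw_{k/2-1}\log_{k-1}^\pm=\Tw_{k/2-1}(w^\pm)^{-1}\cdot\Tw_{k/2-1}\tlog_{k-1}^\pm$. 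Collecting terms yields the desired unit
\[
\tilde u^\pm=\left(\Tw_{k/2-1}(w^\pm)\right)^\tau\cdot u^\pm\cdot\Tw_{k/2-1}(w^\pm)^{-1},
\]
which is manifestly a product of units and therefore lies in $\LL_{\Zp}(\Gamma_\cyc)^\times$; this establishes the functional equation with $\tilde u^\pm$ in place of $u^\pm$.

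The remaining, and genuinely substantive, point is the congruence $\tilde u^\pm\in 1+p\Zp+(\gamma_0-1)\LL_{\Zp}(\Gamma_\cyc)$. The cleanest route is to reduce modulo the augmentation-type ideal $\mathfrak{a}:=p\Zp+(\gamma_0-1)\LL_{\Zp}(\Gamma_\cyc)$ and show $\tilde u^\pm\equiv 1$. Modulo $\mathfrak{a}$ the group-like element $\gamma_0$ becomes $1$, so every unit of $\LL_{\Zp}(\Gamma_\cyc)$ maps into $\FF_p^\times$, the involution $\tau$ acts trivially on this residue (as it fixes the trivial character), and $\Tw_{k/2-1}$ likewise acts trivially on constants. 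Hence modulo $\mathfrak{a}$ the formula for $\tilde u^\pm$ collapses to the image of $u^\pm$ alone, and Remark~\ref{rk:units} already records $u^\pm\in 1+p\Zp+(\gamma_0-1)\LL_{\Zp}(\Gamma_\cyc)$, i.e.\ $u^\pm\equiv 1\pmod{\mathfrak{a}}$. I expect the main obstacle to be verifying precisely that the comparison units $w^\pm$ and their twists contribute trivially modulo $\mathfrak{a}$; one must check that the residue of $\Tw_{k/2-1}(w^\pm)$ is $\tau$-invariant so that the factor $(\Tw_{k/2-1}(w^\pm))^\tau$ cancels $\Tw_{k/2-1}(w^\pm)^{-1}$ upon reduction, which follows because both reduce to the same element of $\FF_p^\times$, a field on which $\tau$ acts as the identity.
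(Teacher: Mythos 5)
Your proposal is correct and follows essentially the same route as the paper: you write $\Tw_{k/2-1}\tlog_{k-1}^\pm$ as a unit multiple of $\Tw_{k/2-1}\log_{k-1}^\pm$, apply $\tau$ and Lemma~\ref{lem:pollackFE} to get $\tilde u^\pm=\frac{(v^\pm)^\tau}{v^\pm}u^\pm$ with $v^\pm=\Tw_{k/2-1}(w^\pm)$, and then observe that the comparison-unit factor is trivial modulo $p\Zp+(\gamma_0-1)\Lambda_{\Zp}(\Gamma_\cyc)$ so that Remark~\ref{rk:units} finishes the argument. The only cosmetic difference is that the paper checks $(v^\pm)^\tau/v^\pm\in 1+(\gamma_0-1)\Lambda_{\Zp}(\Gamma_\cyc)$ directly (since $\tau$ is trivial modulo $\gamma_0-1$), whereas you reduce modulo the maximal ideal $(p,\gamma_0-1)$; both yield the stated congruence.
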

\begin{proof}
Suppose that $\Tw_{k/2-1}\tlog_{k-1}^\pm=v^\pm\Tw_{k/2-1}\log_{k-1}^\pm$, where $v^\pm\in\Lambda_{\Zp}(\Gamma_\cyc)$. Then, Lemma~\ref{lem:pollackFE} tells us that
\[
\left(\Tw_{k/2-1}\tlog_{k-1}^\pm\right)^\tau=\frac{(v^\pm)^\tau}{v^\pm} u^\pm\left(\Tw_{k/2-1}\tlog_{k-1}^\pm\right).
\]
Since $v^\pm\in \Lambda_{\Zp}(\Gamma_\cyc)^\times$, the quotient $(v^\pm)^\tau/v^\pm\in 1+(\gamma_\cyc-1)\Lambda_{\Zp}(\Gamma_\cyc)$. Hence we are done by Remark~\ref{rk:units}.
\end{proof}

\begin{remark}\label{rk:FElog}
For $\q\in\{\p,\p^c\}$, let $\tilde u_\q^\pm$ denote the element obtained from $\tilde u^\pm$ on replacing $\gamma_0$ by $\gamma_\q$, where $\tilde u^\pm$ are given in Corollary~\ref{cor:tildeFE}. We may translate the functional equation given in the statement of Corollary~\ref{cor:tildeFE} to:
\[
\left(\Tw_{k/2-1}\tlog_{k-1,\q^c}^\pm\right)^\tau=\tilde{u}_\q^{\pm}\Tw_{k/2-1}\tlog_{k-1,\q}^\pm.
\]
\end{remark}

\begin{proof}[Proof of Theorem~\ref{thm:L5}]
Our strategy here is inspired by an argument due to Castella and Wan in \cite{castellawan1607}, where the doubly-signed $p$-adic $L$-functions of an elliptic curve are studied. The factorisation formula \eqref{eqn:factorizationap0} for $\mathfrak{L}_{\alpha,\alpha}$ gives
\begin{align*}
4\alpha^2\fL_{\alpha,\alpha}=&\Tw_{k/2-1}\tlog_{k-1,\p}^+\Tw_{k/2-1}\tlog_{k-1,\p^c}^+\fL_{+,+}-\alpha\Tw_{k/2-1}\tlog_{k-1,\p}^-\Tw_{k/2-1}\tlog_{k-1,\p^c}^+\fL_{-,+}\\
&-\alpha\Tw_{k/2-1}\tlog_{k-1,\p}^+\Tw_{k/2-1}\tlog_{k-1,\p^c}^-\fL_{+,-}+\alpha^2\Tw_{k/2-1}\tlog_{k-1,\p}^-\Tw_{k/2-1}\tlog_{k-1,\p^c}^-\fL_{-,-}\,.
\end{align*}
On combining the functional equations from Theorem~\ref{thm:functinaleqn2var} and Remark~\ref{rk:FElog}, we deduce that the quantity
\begin{equation}\label{eq:L0}
\begin{split}
\Tw_{k/2-1}\tlog_{k-1,\p}^+\Tw_{k/2-1}\tlog_{k-1,\p^c}^+\left(\fL_{+,+}-\tilde u^+_{\fp}\tilde u^+_{\fp^c}\mathscr{G}_f\fL_{+,+}^\tau\right)
\\
+\alpha^2\Tw_{k/2-1}\tlog_{k-1,\p}^-\Tw_{k/2-1}\tlog_{k-1,\p^c}^-&\left(\fL_{-,-}-\tilde u^-_{\fp}\tilde u^-_{\fp^c}\mathscr{G}_f\fL_{-,-}^\tau\right)
\end{split}
\end{equation}
is equal to
\begin{equation}\label{eq:Lalpha}
\begin{split}
\alpha\Tw_{k/2-1}\tlog_{k-1,\p}^-\Tw_{k/2-1}\tlog_{k-1,\p^c}^+\left(\fL_{-,+}-\tilde u^-_{\fp}\tilde u^+_{\fp^c}\mathscr{G}_f\fL_{+,-}^\tau\right)
\\
+\alpha \Tw_{k/2-1}\tlog_{k-1,\p}^+\Tw_{k/2-1}\tlog_{k-1,\p^c}^-&\left(\fL_{+,-}-\tilde u^+_{\fp}\tilde u^-_{\fp^c}\mathscr{G}_f\fL_{-,+}^\tau\right).
\end{split}
\end{equation}
{Recall that $v_p$ is the $p$-adic valuation on $L$ normalized by $v_p(p)=1$. In particular, $v_p(\alpha)=\frac{k-1}{2}\in \frac{1}{2}+\ZZ$ as $k$ is even. Let $e$ be the ramification index of $L_0/\Qp$. Since $\fL_{\pm,\pm}$,  $\Tw_{k/2-1}\tlog_{k-1,\p}^\pm$ and $\Tw_{k/2-1}\tlog_{k-1,\p^c}^\pm$ are all defined over $L_0$, as a power series in $L[[\gamma_{\p}-1,\gamma_{\p^c}-1]]$,  the non-zero coefficients in \eqref{eq:L0} have valuations in $\frac{1}{e}\ZZ$, whereas those in \eqref{eq:Lalpha} have valuations in $v_p(\alpha)+\frac{1}{e}\ZZ=\frac{1}{2}+\frac{1}{e}\ZZ$. Therefore, under our hypothesis that $e$ is odd, both \eqref{eq:L0} and \eqref{eq:Lalpha} have to be zero.} In particular,
\begin{align}\Tw_{k/2-1}\tlog_{k-1,\p}^+\Tw_{k/2-1}\tlog_{k-1,\p^c}^+&\left(\fL_{+,+}-\tilde u^+_{\fp}\tilde u^+_{\fp^c}\mathscr{G}_f\fL_{+,+}^\tau\right)=\notag\\
&\label{eq:FEpm}-\alpha^2\Tw_{k/2-1}\tlog_{k-1,\p}^-\Tw_{k/2-1}\tlog_{k-1,\p^c}^-\left(\fL_{-,-}-\tilde u^-_{\fp}\tilde u^-_{\fp^c}\mathscr{G}_f\fL_{-,-}^\tau\right)\,.
\end{align}

Let $\pi_\ac$ be the projection to the anticyclotomic line (parallel to the cyclotomic line). If  $\theta$ is a character on $\Gamma_\ac$ which sends $\gamma_\ac$ to a primitive $p^{2n-1}$-st root of unity, then \eqref{eq:pollack1} and \eqref{eq:pollack2} tell us that
\begin{align*}
\pi_\ac\left(\Tw_{k/2-1}\tlog_{k-1,\p}^-\Tw_{k/2-1}\tlog_{k-1,\p^c}^-\right)(\theta)=0;\\
\pi_\ac\left(\Tw_{k/2-1}\tlog_{k-1,\p}^+\Tw_{k/2-1}\tlog_{k-1,\p^c}^+\right)(\theta)\ne0.
\end{align*}
Therefore, on applying $\pi_\ac$ to \eqref{eq:FEpm}, we deduce that $$\fL_{+,+}^\ac-(\tilde u^+_{\fp}\tilde u^+_{\fp^c})^\ac\mathscr{G}^\ac_f\fL_{+,+}^\ac\in  \LL_{L}(\Gamma_\ac)$$ vanishes at infinitely many characters of finite order. Thus, it must be identically $0$. Given that $\mathscr{G}^\ac_f=\epsilon(f/K)=-1$ by Theorem~\ref{thm:mainfunctionaleqninthreevariables}, we conclude that
\[
\fL_{+,+}^\ac+(\tilde u^+_{\fp}\tilde u^+_{\fp^c})^\ac\fL_{+,+}^\ac=0.
\]
The last part of Corollary~\ref{cor:tildeFE} tells us that $1+(\tilde u^+_{\fp}\tilde u^+_{\fp^c})^\ac\ne0$ and this shows that $\fL_{+,+}^\ac=0$. The proof for $\fL_{-,-}^\ac=0$ is similar.
\end{proof}

\begin{remark}
It is not clear to us how to deduce the vanishing of $\fL_{+,-}^\ac$ or of $\fL_{-,+}^\ac$ based on the argument we present above. This is the reason why we have restricted our attention to the symmetric choice of signs.
\end{remark}

\section*{Acknowledgments}
We would like to thank David Loeffler for answering our questions regarding his work in \cite{loefflerERL}. We would also like to thank the anonymous referee for carefully reading an earlier version of the manuscript and their helpful suggestions and comments, which led to many improvements of the presentation of the paper.
 
\bibliographystyle{amsalpha}
\bibliography{references}

\end{document}